\newtheorem{theorem}{Theorem}
\newtheorem{lemma}[theorem]{Lemma}
\newtheorem{corollary}[theorem]{Corollary}
\theoremstyle{definition}
\numberwithin{equation}{section}
\numberwithin{theorem}{section}
\newenvironment{OMabstract}{\noindent\textbf{Abstract.} }{\medskip}
\begin{document}

\author{D.A. Baranov, V.Z. Grines, O.V. Pochinka and E.E. Chilina} 
\title{On classification of periodic maps on the 2-torus}
\maketitle

\begin{center}

HSE University

\end{center}

\begin{OMabstract}
In this paper, following J.Nielsen, we introduce  a complete characteristic of orientation preserving periodic maps on the two-dimensional torus. All admissible complete characteristics were found and realized. In particular, each of classes of non-homotopic to the identity  orientation preserving  periodic homeomorphisms on the 2-torus   is realized by an algebraic automorphism. Moreover, it is shown that number of such classes is finite.  Due to  V.Z. Grines and A. Bezdenezhnykh, any gradient like orientation  preserving diffeomorphism of an orientable surface is represented as a superposition of the time-1 map of a gradient-like flow and some periodic homeomorphism. So, results of this  work  are directly  related to the complete topological classification of gradient-like diffeomorphisms on surfaces. 
\end{OMabstract}


\section{Introduction}

	  In \cite{Nielsen37},   J.Nielsen  found necessary and sufficient conditions of  topological conjugacy of periodic transformations of closed orientable surfaces. Describing of  all topological classes  for periodic maps is a difficult and boundless task. However,  this problem  was completely solved in the case of the two-dimensional sphere by Kerkyarto in \cite{Ker} and was partially solved in the case of the two-dimensional torus  by Brauer in  \cite{Br}.  In the present paper we describe all classes of topological conjugacy for periodic maps on the two-dimensional torus by means of  orientation preserving homeomorphisms. Moreover, the work contains  realization of  non-homotopy to the identity periodic maps on the two-dimensional torus by algebraic automorphisms. In addition, it is shown that number of such classes is finite. Notice that there are an infinite set of topological conjugacy classes for homotopic to the identity periodic maps of the two-dimensional torus. The realization of such classes is presented in \cite{KR}.

	Let $S$ be a closed orientable surface. Let us recall  that homeomorphisms $f,f':S\to S$ are called {\it topologically conjugate}  if there is a homeomorphism $h:S\to S$ such that $f'$ = $h\circ f\circ h^{-1}$. A homeomorphism $f$ is called periodic of period $n$  if $f^n = id$ and $f^m \neq id$ for each natural $m <n$.

	From the results of J. Nielsen \cite{Nielsen37}  the following statements are true for any orientation preserving periodic map $f$ on a closed orientable surface  $S$, whose period is $n$:

	\begin{enumerate}
		\item For each  $f$ we denote by  $\bar B_f\subset S$ a set of  {\it  periodic points of the homeomorphism $f$ with period strictly less than  $n$}. This set is either empty or consists of a finite number of orbits $\mathcal O_1,\dots, O_k$, $k\geq 1$. Denote by  $n_i$  period of a orbit $\mathcal O_i$, $i\in \{1, \dots, k\} $. Then $n_i$ is a divisor of $n$. Set $\lambda_i=\frac{n}{n_i}$, then for any orbit  $\mathcal O_i\subset\bar B_f$  there exists a unique number $\delta_i\in\{1,\dots,\lambda_i-1\}$ such that it is coprime to  $\lambda_i$   and there is  some neighborhood  $D_{\bar x_i}$ of a point $\bar x_i\in\mathcal O_i$ such that the restriction $f^{n_i}|_{D_{\bar x_i}}$ is topologically conjugate with the rotation by the angle  $\frac{2\pi \delta_i}{\lambda_i}$  of the complex plane around the origin:
		\begin{equation}\label{povorot}
		z \rightarrow e^{\frac{2\pi \delta_i}{\lambda_i}\bf{i}} z.
		\end{equation}
		\begin{figure}[h!]\center{\includegraphics[width=1\linewidth]{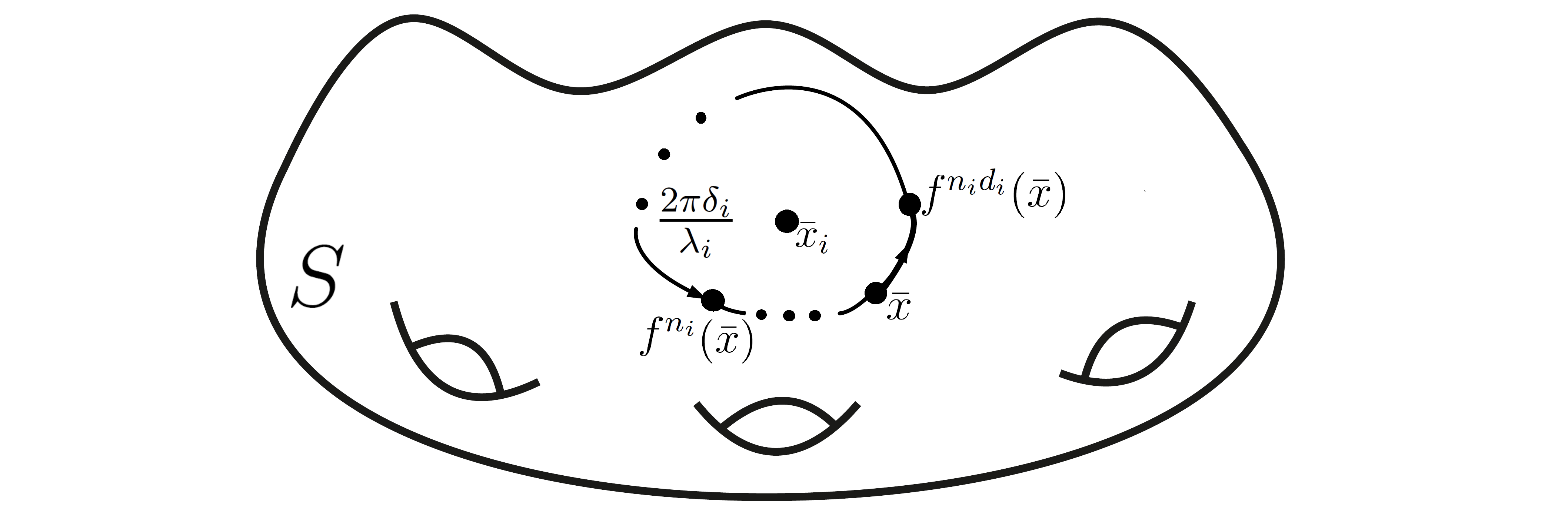}}\caption{  The action of the homeomorphism $f^{n_i}$ in some neighborhood of the point $\bar x_i$. }\label{tor}
\end{figure}

		The figure \ref{tor} illustrates the action of the homeomorphism $f^{n_i}$ in the neighborhood of the point $\bar x_i$.  This point has period $n_i$ with respect to the homeomorphism  $f$ and is  fixed point with respect to the homeomorphism $f^{n_i}$. The action of the homeomorphism $f^{n_i}$  is topologically conjugate to the rotation  by the angle $\frac{2\pi \delta_i}{\lambda_i}$ around the point in the counterclockwise direction.

		\item  For each  $\delta_i$ denote by  $d_i\in\{1,\dots,\lambda_i-1\}$ the  number such that $d_i\delta_i\equiv 1\pmod {\lambda_i}$.  Due to conjugacy with the map (\ref{povorot}) there exists  a  curve which is  homeomorpic to  the circle and  such that it is invariant under the homeomorphism $f^{n_i}$ and bounds a  disk containing a point $\bar x_i\in\bar B_f$. Then the number  $d_i$  has the following property: the open arc of the invariant curve  joining the  points  $\bar x$ and $ f^{n_id_i}(\bar x)$ (in the counterclockwise direction)  does not contain points of the orbit of point $\bar x$. A pair of numbers $(n_i,d_i)$   is called the {\it valency} of the orbit  $\mathcal O_i$.
		
		The figure \ref{tor} illustrates the closed curve. This is invariant with respect to the homeomorphism $f^{n_i}$ and contains each of points of the orbit of the point $\bar x$ under $f^{n_i}$. The open arc of the curve coonecting  points $\bar x$  and $ f^{n_id_i}(\bar x)$ (in the counterclockwise direction) does not contain points of this orbit.
		
		\item For each $f$ denote by $G=\{id,f,\dots,f^{n-1}\}$ the group which is  isomorphic to $\mathbb Z_n=\{0,\dots,n-1\}$. The orbit space $\Sigma=S/G$  is   called the {\it modular surface}. Also this is a closed surface, and the {\it natural projection} $\pi:S\to\Sigma$ is an $n$-fold covering map everywhere except the points of the set $\bar B_f$. Put  $x_i=\pi(\mathcal O_i)$.

	\end{enumerate}

	Let $p$ be the genus of the surface $S$ and $g$ be the genus of the modular surface $\Sigma$.

	For each periodic transformation $f$ of a closed orientable surface $S$ we define the collection of the numbers \begin{equation*}\kappa= (n, p, n_1,..., n_k, d_1,..., d_k), \end{equation*}
	 
	which we call the {\it complete characteristic} of the periodic transformation $f$.

		Denote by $gcd(a_1,\dots,a_m)$ the greatest common divisor of integers $a_1,\dots,a_m$.

	Next statements follow from above  mentioned the  works of Kerkjarto, Brouwer and Nielsen

	\newtheorem{statement}{Statement}
	
		\begin{statement}\label{exis}
		There is a periodic homeomorphism whose complete characteristic  is $\kappa= (n, p, n_1,..., n_k, d_1,..., d_k)$ and  genus of the modular surface is $g$ if and only if the following conditions are satisfied:
		
		\begin{enumerate}
		\item \begin{equation}\label{form2}
		\sum\limits_{i=1}^kd_in_i\equiv 0\pmod n 
	\end{equation}

\item	\begin{equation}\label{form}
		2p +  \sum\limits_{i=1}^k n_{i} - 2 = n(2g + k - 2).
	\end{equation}

		\item 	if $g=0$: \begin{equation}\label{sph}
	gcd(n_1d_1,\dots,n_kd_k,n)=1
	\end{equation}
		\end{enumerate}
		
	\end{statement}

		\begin{statement}\label{clNi}
		Two periodic maps $f$ and $f'$ on the orientable surface $S$ are topologically conjugate by means of an orientation preserving homeomorphism if and only the complete characteristic of $f$ coincides with  the complete characteristic of $f'$.
	\end{statement}
	
	If $\bar B_f=\emptyset$ the periodic transformation $f$ is completely described by the set of the numbers $(n, p)$. In this case the natural projection $\pi:S \to\Sigma$ is an $n$-fold covering map of the modular surface $\Sigma$ of genus $g$ by the surface $S$ of genus $p$. This implies the following statement.
	
	\begin{statement}\label{free} Two periodic transformations $f$, $f'$ of the surface $S$ such that  $\bar B_f=\emptyset$, $\bar B_{f'}=\emptyset$  are topologically conjugate by means of an orientation preserving homeomorphism if and only if $f$ and $f'$ have the same periods.
	\end{statement}

The main results of this work are the following theorems.

	\begin{theorem}\label{t2} There is an orientation preserving periodic homeomorphism  $f:\mathbb T^2\to\mathbb T^2$    such that $\bar B_f\neq\emptyset$ if and only if the complete characteristic of $f$ coincides exactly with one of seven complete characteristics:
			\begin{enumerate}
			\item $\kappa_1$: $n=2, \, p=1, \,n_1=n_2=n_3=n_4=1,\,d_1=d_2=d_3=d_4=1$;
			\item$\kappa_2$:  $n=3, \, p=1, \,n_1=n_2=n_3=1,\,d_1=d_2=d_3=1$;
			\item $\kappa_3$: $n=3,\, p=1 , \,n_1=n_2=n_3=1,\,d_1=d_2=d_3=2$;
			\item $\kappa_4$: $n=6, \, p=1,\,  n_1=3, n_2=2, n_3=1,\, d_1=d_2=d_3=1$;
			\item $\kappa_5$: $n=6,\, p=1 ,  \, n_1=3, n_2=2, n_3=1,\,d_1=1,\,d_2=2,\,d_3=5$;
			\item $\kappa_6$: $n=4,\, p=1 , \,n_1=2,\,n_2=n_3=1,\,d_1=d_2=d_3=1$;
			\item $\kappa_7$: $n=4,\, p=1,\,n_1=2,\,n_2=n_3=1,\,d_1=1,\,d_2=d_3=3$.
		\end{enumerate}  
	\end{theorem}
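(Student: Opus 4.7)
The plan is to apply Statement~\ref{exis} on $\mathbb{T}^2$ and enumerate all tuples $\kappa=(n,p,n_1,\dots,n_k,d_1,\dots,d_k)$ with $k\ge 1$ (so $\bar B_f\neq\emptyset$) satisfying its three conditions; once the list of admissible tuples is shown to be exactly the seven stated, the \emph{if} direction is handed to us for free by Statement~\ref{exis} itself, since each $\kappa_j$ passes (\ref{form2})--(\ref{sph}) by construction.

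Substituting $p=1$ into (\ref{form}) gives $\sum_{i=1}^{k}n_i=n(2g+k-2)$. Each $n_i$ is a proper divisor of $n$, hence $n_i\le n/2$, and so $\sum n_i\le kn/2$, which forces $2g+k\le 4$. If $g\ge 1$, then the right-hand side of (\ref{form}) is at least $nk$, whereas the left-hand side is strictly less than $nk$ (as $n_i<n$), a contradiction. Hence $g=0$ and $k\in\{3,4\}$.

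When $k=4$, equation (\ref{form}) reads $\sum n_i=2n$, so the bound $n_i\le n/2$ is saturated at every $i$; then $\lambda_i=2$, forcing $d_i=1$, the congruence (\ref{form2}) is automatic, and (\ref{sph}) reduces to $\gcd(n/2,n)=1$, giving $n=2$. This produces $\kappa_1$. When $k=3$, setting $\lambda_i=n/n_i\ge 2$ turns (\ref{form}) into the classical relation $\tfrac{1}{\lambda_1}+\tfrac{1}{\lambda_2}+\tfrac{1}{\lambda_3}=1$, whose only integer solutions up to order are the Platonic triples $(3,3,3)$, $(2,4,4)$ and $(2,3,6)$. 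For each triple I would write $n=m\cdot\mathrm{lcm}(\lambda_i)$, list the $d_i$ coprime to $\lambda_i$ in $\{1,\dots,\lambda_i-1\}$, prune by (\ref{form2}), and then pin $m=1$ using (\ref{sph}); I expect $(3,3,3)$ to yield $\kappa_2,\kappa_3$, $(2,4,4)$ to yield $\kappa_6,\kappa_7$, and $(2,3,6)$ to yield $\kappa_4,\kappa_5$.

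The squeeze $2g+k\le 4$ and the Platonic trichotomy are both immediate; the main obstacle, modest but real, is the bookkeeping inside the $k=3$ branch: verifying in each sub-case that precisely two valency tuples $(d_1,d_2,d_3)$ satisfy (\ref{form2}) simultaneously with (\ref{sph}), and that characteristics agreeing up to a permutation of the orbits $\mathcal O_i$ that preserves $(n_1,\dots,n_k)$ are counted only once, so that the final list is genuinely seven and not a proper superset.
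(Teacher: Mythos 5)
Your proposal is correct, and the necessity half follows essentially the same route as the paper: substitute $p=1$ into (\ref{form}), deduce $g=0$ and $k\in\{3,4\}$ from $n_i\le n/2$ (the paper isolates this as Lemmas \ref{co12} and \ref{co2}; note that $k\ge 3$ needs the one-line observation $\sum n_i=n(k-2)>0$, which you leave implicit), then pin down $n$ via (\ref{sph}) and the $d_i$ via (\ref{form2}). Your repackaging of the $k=3$ branch as the Platonic equation $\tfrac{1}{\lambda_1}+\tfrac{1}{\lambda_2}+\tfrac{1}{\lambda_3}=1$ with $n=m\cdot\mathrm{lcm}(\lambda_i)$ and $m$ killed by (\ref{sph}) is cleaner than the paper's direct case split on $n_1\in\{n/2,n/3\}$, and the finite check of (\ref{form2}) does yield exactly two valency tuples in each of the three sub-cases, matching $\kappa_2,\dots,\kappa_7$. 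The genuine divergence is in the sufficiency direction: you invoke the \emph{if} part of Statement \ref{exis} to get existence for free, whereas the paper constructs, for each $\kappa_j$, an explicit periodic homeomorphism by cutting the modular sphere along arcs through the branch points, assembling $n$ copies into a polygon whose side identifications give $\mathbb T^2$, and rotating. Your route is shorter and perfectly legitimate given that Statement \ref{exis} is quoted as established (Nielsen/Brouwer/Kerekj\'art\'o); the paper's route buys concrete models, which it then reuses when matching the classes to the algebraic automorphisms in Theorem \ref{t3}. No gap in substance, only the deferred (but entirely routine and verifiable) bookkeeping you yourself flag.
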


		\begin{theorem}\label{t1} Let $f:\mathbb T^2\to\mathbb T^2$ be an  orientation preserving periodic homeomorphism  of period $n\in\mathbb N$. Then the following conditions are equivalent:
		\begin{enumerate}
			\item $f$ is homotopic to the identity;
			\item $\bar B_f=\emptyset$;
			\item $g=1$;
			\item $f$ is topologically conjugate to the shift on the torus $\Psi_{n}\left(e^{i2x\pi},e^{i2y\pi}\right)= \left(e^{i2\pi\left(x+\frac{1}{n}\right)},e^{i2y\pi}\right).$
		\end{enumerate}  
	\end{theorem}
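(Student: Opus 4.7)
The plan is to verify the four implications cyclically, drawing on the Riemann--Hurwitz formula \eqref{form}, Statement \ref{free}, and a Lefschetz fixed-point argument. The implications $(2)\Leftrightarrow(3)$, $(2)\Rightarrow(4)$, and $(4)\Rightarrow(1)$ should be routine bookkeeping on top of the statements already cited; the substantive step, and where I expect the main obstacle, is $(1)\Rightarrow(2)$.

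For $(2)\Leftrightarrow(3)$ I would read both directions off \eqref{form} with $p=1$. If $k=0$ the formula collapses to $0=n(2g-2)$, giving $g=1$; conversely $g=1$ reduces \eqref{form} to $\sum_{i=1}^{k} n_i=nk$, which forces $k=0$ because each $n_i$ is a proper divisor of $n$ and hence $n_i\le n/2$, so $\sum n_i\le nk/2<nk$ whenever $k>0$. For $(2)\Rightarrow(4)$ I would check that the model shift $\Psi_n$ is itself orientation preserving and periodic of period exactly $n$ with $\bar B_{\Psi_n}=\emptyset$, and then apply Statement \ref{free}. For $(4)\Rightarrow(1)$ I would use the explicit translation isotopy $\Psi_{n,t}(e^{i2\pi x},e^{i2\pi y})=(e^{i2\pi(x+t/n)},e^{i2\pi y})$, $t\in[0,1]$, from $\mathrm{id}$ to $\Psi_n$, and then conjugate by the homeomorphism realizing (4) to obtain an isotopy from $\mathrm{id}$ to $f$.

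The heart of the proof is $(1)\Rightarrow(2)$, which I plan to handle via the Lefschetz fixed point theorem. Since $f$ is homotopic to the identity every iterate $f^m$ is too, so $f^m_*=\mathrm{id}_*$ on $H_*(\mathbb T^2)$ and $L(f^m)=\chi(\mathbb T^2)=0$. Suppose for contradiction $\bar B_f\neq\emptyset$ and fix an orbit $\mathcal O_i\subset\bar B_f$. Every fixed point $\bar y\in\mathrm{Fix}(f^{n_i})$ has $f$-period $n_j\mid n_i<n$, so $\bar y\in\bar B_f$, and by property~1 of the Nielsen description recalled above $f^{n_j}$ is locally conjugate near $\bar y$ to rotation by $2\pi\delta_j/\lambda_j$; consequently $f^{n_i}=(f^{n_j})^{n_i/n_j}$ is locally conjugate to rotation by $2\pi\delta_j(n_i/n_j)/\lambda_j$. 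The critical local check, which I regard as the main obstacle, is that this rotation angle is nonzero modulo $2\pi$: triviality would demand $\lambda_j\mid(n_i/n_j)\delta_j$, and $\gcd(\delta_j,\lambda_j)=1$ then forces $\lambda_j\mid n_i/n_j$, i.e.\ $n\mid n_i$, contradicting $n_i<n$. Therefore every such $\bar y$ is an isolated fixed point of index $+1$, so $L(f^{n_i})=|\mathrm{Fix}(f^{n_i})|\ge|\mathcal O_i|=n_i\ge 1$, contradicting $L(f^{n_i})=0$. This forces $\bar B_f=\emptyset$ and closes the cycle.
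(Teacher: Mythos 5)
Your proposal is correct, and three of the four arrows ($2\Leftrightarrow 3$ via formula (\ref{form}) with $p=1$, $2\Rightarrow 4$ via Statement \ref{free}, and $4\Rightarrow 1$ via an isotopy of translations, where the paper instead notes the trivial action on $\pi_1$) coincide in substance with the paper's proof. The genuine divergence is in $1\Rightarrow 2$, which both you and the paper attack through the Lefschetz--Hopf formula, but by different means. The paper first invokes Theorem \ref{t2} to reduce to the seven explicit characteristics $\kappa_1,\dots,\kappa_7$, picks the largest period $n_*$ occurring in $\bar B_f$, and asserts that summing the indices of the fixed points of $f^{n_*}$ in each of the seven cases gives a nonzero total, using its stated rule that a rotation by $2\pi\delta/\lambda$ has index $\pm\delta$. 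Your argument is uniform: you show that every fixed point of $f^{n_i}$ lies in $\bar B_f$, is locally a \emph{nontrivial} rotation (the divisibility check $\lambda_j\mid \delta_j(n_i/n_j)\Rightarrow n\mid n_i$ is exactly the right computation), and hence is isolated of index $+1$, so the index sum equals $|\mathrm{Fix}(f^{n_i})|\geq n_i>0$. This buys two things: it makes Theorem \ref{t1} logically independent of the classification in Theorem \ref{t2} (the paper's proof of $1\Rightarrow 2$ cannot be used before Theorem \ref{t2} is established), and it replaces the paper's index rule --- which is not the standard fixed-point index of a rotation (that index is always $+1$ for a nontrivial rotation, since $z\mapsto(1-e^{i\theta})z$ has degree $1$) --- by the correct local computation. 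The only point worth making explicit in a final write-up is that $\mathrm{Fix}(f^{n_i})$ is finite (it is contained in $\bar B_f$, which is a finite union of finite orbits), so that the Lefschetz--Hopf formula in the form (\ref{form8}) applies.
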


	Let $A=\begin{pmatrix} 
    a & b \\
    c & d 
\end{pmatrix}\in Gl(2,\mathbb{Z})$, that is, $A$ is a second-order integer square matrix  with $\det A=\pm1$. Then, $A$ induces the map  $f_A : \mathbb{T}^2 \rightarrow \mathbb{T}^2$ given by the formula

\begin{equation*} f_A:
 \begin{cases}
   \overline{x}=ax+by \pmod 1 
   \\
  \overline{y}=cx+dy \pmod 1
 \end{cases},
 \end{equation*}
which  is  an {\it algebraic automorphism of the two-dimensional torus}.

	In  section \ref{Kat} we give a classification of algebraic automorphisms of the two-dimensional torus and use them to realize the topological classes of periodic transformations of the two-dimensional torus.

	\begin{theorem}\label{t3} Any non-homotopic to  the identity orientation preserving periodic homeomorphism $f:\mathbb T^2\to\mathbb T^2$ is conjugate by means of an orientation preserving homeomorphism with exactly one map  $f_{A_j}$ induced by the matrix $A_j$ $(j=\overline{1,7} )$:
\begin{center}

 $A_1=\begin{pmatrix}
			-1 & 0\\
			0 & -1\\
		\end{pmatrix}$;
		 $A_2=\begin{pmatrix}
			-1 & -1\\
			1 & 0\\
		\end{pmatrix}$;	
 $A_3=\begin{pmatrix}
			0 & 1\\
			-1 & -1\\
		\end{pmatrix}$;
		$A_{4}=\begin{pmatrix}
			0 & -1\\
			1 & 1\\
		\end{pmatrix}$;
$A_5=\begin{pmatrix}
			1 & 1\\
			-1 & 0\\
		\end{pmatrix}$;
 $A_6=\begin{pmatrix}
			0 & -1\\
			1 & 0\\
		\end{pmatrix}$;
 $A_7=\begin{pmatrix}
			0 & 1\\
			-1 & 0\\
		\end{pmatrix}$.
\end{center}

The complete characteristic of $f_{A_j}$ coincides with a complete characteristic $\kappa_j$.
	\end{theorem}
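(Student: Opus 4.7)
The strategy is to reduce the claim, via the earlier results, to a bounded sequence of linear-algebra computations performed on each of the seven matrices. Since $f$ is not homotopic to the identity, Theorem~\ref{t1} gives $\bar B_f\neq\emptyset$, whence Theorem~\ref{t2} forces the complete characteristic $\kappa(f)$ to coincide with exactly one $\kappa_j$. By Statement~\ref{clNi}, two orientation-preserving periodic homeomorphisms of $\mathbb T^2$ are conjugate by an orientation-preserving homeomorphism if and only if their complete characteristics agree, and the $\kappa_j$ are pairwise distinct, so the uniqueness of the matching index $j$ is automatic. The theorem is therefore reduced to the claim that, for each $j\in\{1,\ldots,7\}$, the map $f_{A_j}$ is an orientation-preserving periodic homeomorphism with $\kappa(f_{A_j})=\kappa_j$.

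For each $A_j$ I would carry out the same recipe. A direct matrix multiplication verifies $\det A_j=+1$ (so $f_{A_j}$ preserves orientation) and determines the least $n$ with $A_j^n=I$, which must match the $n$ appearing in $\kappa_j$. The entry $p=1$ is automatic. To extract the orbit data I would use the standard identity: whenever $\det(A_j^m-I)\neq 0$, the fixed-point set of $f_{A_j}^m$ on $\mathbb T^2$ has cardinality $|\det(A_j^m-I)|$, since fixed points correspond to solutions in $\mathbb R^2/\mathbb Z^2$ of $(A_j^m-I)\binom{x}{y}\in\mathbb Z^2$. Peeling off the points of strictly smaller true period and grouping the rest into $f_{A_j}$-orbits of size $n_i$ recovers the multiset $(n_1,\ldots,n_k)$, which must coincide with that of $\kappa_j$.

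The delicate step is computing each $d_i$. Since $f_{A_j}$ lifts to a linear self-map of $\mathbb R^2$, the germ of $f_{A_j}^{n_i}$ at a fixed point $\bar x_i$ is the linear map $A_j^{n_i}$; this matrix has finite order $\lambda_i=n/n_i$ and determinant $+1$, so it is conjugate by an orientation-preserving element of $GL(2,\mathbb R)$ to a planar rotation by some angle $\theta_i=2\pi\delta_i/\lambda_i$ with $\delta_i\in\{1,\ldots,\lambda_i-1\}$ coprime to $\lambda_i$. The value $\cos\theta_i=\mathrm{tr}(A_j^{n_i})/2$ is immediate, and the correct sign of $\sin\theta_i$ (equivalently, the choice between $\delta_i$ and $\lambda_i-\delta_i$) is pinned down by computing, for any non-zero vector $v$, the determinant $\det(v\mid A_j^{n_i}v)$: a positive value selects $\theta_i\in(0,\pi)$, a negative value selects $\theta_i\in(-\pi,0)$. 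The exponent $d_i$ is then the multiplicative inverse of $\delta_i$ modulo $\lambda_i$. As a consistency check I would verify that the resulting tuple satisfies~(\ref{form2}) and~(\ref{form}) with $g=0$.

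The main obstacle is this orientation bookkeeping: the three pairs $(\kappa_2,\kappa_3)$, $(\kappa_4,\kappa_5)$, $(\kappa_6,\kappa_7)$ differ only in the values of the $d_i$, and they correspond exactly to pairs of mutually inverse matrices (one checks $A_3=A_2^{-1}$, $A_5=A_4^{-1}$, $A_7=A_6^{-1}$), whose actions near the fixed points are rotations by opposite angles. This is precisely why the list contains seven matrices rather than four. Once the sign convention is fixed by the standard orientation of $\mathbb T^2$ inherited from $\mathbb R^2$, each of the seven verifications reduces to a short, explicit computation.
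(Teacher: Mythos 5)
Your proposal is correct and follows essentially the same route as the paper: reduce via Theorem~\ref{t1}, Theorem~\ref{t2} and Statement~\ref{clNi} to verifying that each $f_{A_j}$ realizes $\kappa_j$, determine the periods from the order of $A_j$, and count the periodic orbits by solving $(A_j^m-I)\binom{x}{y}\in\mathbb Z^2$ (the paper does this by listing the congruence systems explicitly for $A_5$ and declaring the other cases analogous). The only cosmetic difference is in extracting the valencies: you read $\delta_i$ off the trace and orientation of the linearization and set $d_i\equiv\delta_i^{-1}\pmod{\lambda_i}$, whereas the paper builds an explicit $\hat A$-invariant closed curve through the orbit of a nearby point and reads $d_i$ from the arc condition; both are valid implementations of the same verification.
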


\section{Auxiliary inequalities for finding  complete characteristics of periodic transformations}

	In this section we prove some useful consequences  from the formula (\ref{form})   $2p +  \sum\limits_{i=1}^k n_{i} - 2 = n(2g + k - 2)$.
	
	\begin{lemma}\label{co1}For any orientation preserving periodic homeomorphism $f: S \rightarrow\ S$  such that $\bar B_f=\emptyset$  the following equality holds:
		\begin{equation}\label{form3}
			p=n(g-1)+1
		\end{equation}
	\end{lemma}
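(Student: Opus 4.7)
The plan is to simply specialize equation (\ref{form}) to the case when $\bar B_f = \emptyset$. By the definition of $\bar B_f$ given in item (1) of the introduction, the set $\bar B_f$ consists of $k$ orbits $\mathcal O_1, \dots, \mathcal O_k$ with $k \geq 1$ when it is non-empty; so the hypothesis $\bar B_f = \emptyset$ corresponds exactly to $k = 0$, and in that case the sum $\sum_{i=1}^k n_i$ is the empty sum, which equals $0$.

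Substituting $k = 0$ and $\sum_{i=1}^k n_i = 0$ into (\ref{form}) yields
\begin{equation*}
2p - 2 = n(2g - 2),
\end{equation*}
and dividing both sides by $2$ gives $p - 1 = n(g - 1)$, i.e.\ $p = n(g-1) + 1$, as required.

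There is no real obstacle here; the only conceptual point worth mentioning is that when $\bar B_f = \emptyset$ the natural projection $\pi \colon S \to \Sigma$ is an honest unramified $n$-fold cover (as noted in the paragraph preceding Statement \ref{free}), so the identity $p = n(g-1) + 1$ can alternatively be read off as the Riemann–Hurwitz formula with no ramification, which provides an independent sanity check for the computation above.
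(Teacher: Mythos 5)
Your proof is correct and coincides with the paper's own argument: both set $k=0$ in equation (\ref{form}) to obtain $2p-2=n(2g-2)$ and then solve for $p$. The Riemann--Hurwitz remark is a nice independent sanity check but does not change the substance.
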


\begin{proof}
Due to  $\bar B_f=\emptyset$, the number $k=0$. Using the formula (\ref{form}) $2p + \sum\limits_{i=1}^k{n_{i}}-2$ = $n(2g+k-2)$, we get $2p-2=n(2g-2)$. Therefore,  $p=n(g-1)+1$.
	
\end{proof}

\begin{lemma}\label{co12} For any orientation-preserving periodic homeomorphism $f: S \rightarrow\ S$  such that $\bar B_f\neq\emptyset$  the following equality holds:
		\begin{equation}\label{form4}
			p>n(g-1)+1
		\end{equation}
	\end{lemma}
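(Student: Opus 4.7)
The plan is to derive the inequality directly from formula (\ref{form}) by isolating $p$ and then controlling the sign of the remainder term using the defining property of $\bar B_f$. First I would rewrite (\ref{form}) as
\begin{equation*}
2p - 2 = n(2g-2) + nk - \sum_{i=1}^{k} n_i = n(2g-2) + \sum_{i=1}^{k}(n - n_i),
\end{equation*}
which after dividing by $2$ yields
\begin{equation*}
p = n(g-1) + 1 + \frac{1}{2}\sum_{i=1}^{k}(n - n_i).
\end{equation*}

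Next I would argue that the correction term is strictly positive. By hypothesis $\bar B_f \neq \emptyset$, so $k \geq 1$. Moreover, by the definition of $\bar B_f$ recalled in the introduction, every orbit $\mathcal O_i \subset \bar B_f$ consists of periodic points whose period $n_i$ is strictly less than $n$ (it is a proper divisor of $n$). Hence $n - n_i \geq 1 > 0$ for each $i \in \{1,\dots,k\}$, and therefore
\begin{equation*}
\frac{1}{2}\sum_{i=1}^{k}(n - n_i) > 0.
\end{equation*}
Combining this with the previous displayed equality gives the required strict inequality $p > n(g-1)+1$.

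There is essentially no serious obstacle: the only point deserving care is the strict inequality $n_i < n$, which is guaranteed by the convention that $\bar B_f$ collects only points of period \emph{strictly} less than $n$. (As a sanity check, the parity of the sum is not an issue because $2p - 2 - n(2g-2)$ is automatically even.) The proof is therefore a direct computation from (\ref{form}), parallel in structure to the proof of Lemma \ref{co1} but using $k \geq 1$ together with $n_i < n$ instead of $k = 0$.
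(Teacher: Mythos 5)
Your proof is correct, and it takes a genuinely cleaner route than the paper's. You solve (\ref{form}) for $p$ to get $p=n(g-1)+1+\tfrac12\sum_{i=1}^k(n-n_i)$ and observe that the correction term is strictly positive because $k\geq 1$ and each $n_i$ is a proper divisor of $n$ (so $n-n_i\geq 1$); this gives the strict inequality in one uniform step. The paper instead starts from the chain $0<k\leq\sum_i n_i\leq \tfrac{nk}{2}<nk$ and splits into the cases $p=0$, $p=1$, $p>1$: in the last case it derives the inequality directly, while in the first two it only concludes $g=0$ and leaves implicit the check that $p>1-n$ then holds (which for $p=0$ requires noting $n\geq 2$, guaranteed since $\bar B_f\neq\emptyset$ forces $n>1$). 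Your version avoids the case analysis and the unstated endgame of the first two cases entirely; what the paper's version buys is the explicit byproduct $g=0$ when $p\leq 1$, which is the fact actually reused later (e.g.\ in the proof of Lemma \ref{co2} for the torus). Both arguments rest on the same two inputs, formula (\ref{form}) and the strict bound $n_i<n$.
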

	\begin{proof} By definition $n_i$ is a divisor of $n$ not exceeding $n$ for all $i=\overline{1,k}$, therefore, $n_i\leqslant\frac{n}{2}$ and the following relation holds:  \begin{equation}\label{form5}
			0<k\leqslant\sum\limits_{i=1}^k{n_{i}}\leqslant \frac{nk}{2}<nk.
		\end{equation} Let consider all possible cases:
		\begin{enumerate}
			\item $p=0.$ Then, equality (\ref{form})  is equivalent to equality $\sum\limits_{i=1}^k{n_{i}}-2=n(2g+k-2)$. Transforming this, we get: $\sum\limits_{i=1}^k{n_{i}}-2=n(2g-2)+nk$.  It follows from  (\ref{form5})  that $nk-2>n(2g-2)+nk$. Hence, $n(2g-2)<-2$ and $n(1-g)>1$. Then, $1-g>0$. Considering that $g\in\mathbb{N}$, we get $g=0$.
			\item $p=1$. Then, equality (\ref{form}) is equivalent to equality $\sum\limits_{i=1}^k{n_{i}}=n(2g+k-2)$. Using (\ref{form5}), we obtain $n(2g+k-2)<nk$. Therefore, $g-1<0$ and $g<1$. Considering  $g\in\mathbb{N}$, we get $g=0$.
			\item $p>1$. Using (\ref{form}) and  (\ref{form5}),  we obtain that $n(2g+k-2)<2p+nk-2$. Whence, $n(g-1)<p-1$ and  $p>n(g-1)+1$.
		\end{enumerate}
		Thus, in all cases the following inequality holds: $p>n(g-1)+1$.

\end{proof}
	
	\begin{lemma}\label{co2} For any orientation-preserving periodic homeomorphism $f:\mathbb T^2 \rightarrow\mathbb T^2$  such that $\bar B_f\neq\emptyset$ the following equality holds:
		\begin{equation}\label{form6}
			2<\frac{2n}{n-1}\leqslant k\leqslant 4
		\end{equation}
	\end{lemma}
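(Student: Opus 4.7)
The plan is to specialize the formula (\ref{form}) to the torus case $p=1$ and combine it with the bounds on the $n_i$'s that were already recorded in (\ref{form5}).

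First I would apply Lemma \ref{co12}: since $p=1$ and $\bar B_f \ne \emptyset$, the proof of that lemma (case $p=1$) forces $g=0$. Plugging $p=1$ and $g=0$ into (\ref{form}) collapses it to
\begin{equation*}
\sum_{i=1}^{k} n_i \;=\; n(k-2),
\end{equation*}
which in particular requires $k \geq 3$, so there is at least one orbit and the inequalities make sense.

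Next I would squeeze $\sum n_i$ between the two bounds recorded in (\ref{form5}), namely $k \leq \sum n_i \leq nk/2$ (the upper bound uses that each $n_i$ is a proper divisor of $n$, hence $n_i \leq n/2$). Substituting $\sum n_i = n(k-2)$ into the left inequality gives $k \leq n(k-2)$, i.e. $k(n-1) \geq 2n$, which is exactly $k \geq \frac{2n}{n-1}$; substituting into the right inequality gives $n(k-2) \leq nk/2$, i.e. $k \leq 4$. Finally, the strict inequality $\frac{2n}{n-1} > 2$ is immediate from $2n > 2(n-1)$, completing the chain $2 < \frac{2n}{n-1} \leq k \leq 4$.

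The argument is essentially bookkeeping on top of (\ref{form}), so I do not expect a real obstacle; the only point worth being careful about is invoking Lemma \ref{co12} in the right direction to pin down $g=0$ before doing the bounding, since without that step the formula still contains the free parameter $g$ and neither bound would be sharp.
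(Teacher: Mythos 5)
Your proposal is correct and follows essentially the same route as the paper: invoke Lemma \ref{co12} with $p=1$ to get $g=0$, reduce (\ref{form}) to $nk=\sum_{i=1}^k n_i+2n$, and sandwich $\sum n_i$ using (\ref{form5}) to obtain both bounds on $k$. The algebra matches the paper's step for step, so there is nothing to add.
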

\begin{proof} In the case of the two-dimensional torus $p=1$. Then, it follows from Lemma \ref{co12} that  the genus of the modular surface is equal to $0$. Therefore, equality (\ref{form}) is equivalent to the equality
		\begin{equation}\label{form7}
			nk = \sum\limits_{i=1}^k{n_{i}} + 2n
		\end{equation}	
		Using  inequality (\ref{form5}), we see that $2n+k \leqslant\ nk \leqslant\ 2n + \frac{nk}{2}$. From the left  side of this inequality we obtain that $k \geqslant \frac{2n}{n-1}>\frac{2n-2}{n-1}=2$.  From the right side we obtain $\frac{nk}{2} \leqslant\ 2n$. Therefore, $n(4-k)\geqslant 0$. Considering that $k, n\in\mathbb{N}$, we have $k\leqslant 4$.
	\end{proof}

	\section{Complete characteristics of periodic transformations of the  two-dimensional torus}

It follows from  statement \ref{free} that if the set $\bar B_f=\emptyset$  for the homeomorphism $f$,  then the complete characteristic of $f$ is determined by the genus $p$  of the surface and the period  $n$ of the transformation. Thus, complete characteristics of periodic homeomorphisms on the two-dimensional torus corresponding to the empty set $\bar B_f$ has the form $(n,1)$, where $n$ is the  period of $f$.
	
	In this section we find all  complete characteristics of an orientation preserving periodic homeomorphisms of the two-dimensional torus corresponding to the nonempty set $\bar B_f$ and  prove that there are only 7 such complete characteristics. Using transformations of a modular surface, we realize all such classes  by the periodic homeomorphism.

	{\bf Proof of Theorem \ref{t2}.}  {\bf Let us prove the necessity.} Let $f$ be an orientation preserving periodic homeomorphism of the two-dimensional torus such that $\bar B_f\neq\emptyset$.
	
		Let us find all admissible complete characteristics of the transformation $f$.  Due to Lemma \ref{co2}, we have that the number $k$ of orbits   of the set  $\bar B_f$ should only be equal to  $3$ or $4$. Consider these cases separately.

				1 case: $k=4$. Substituting $4$ for $k$ in (\ref{form7}), we get $2n = \sum\limits_{i=1}^4{n_{i}}$. Due to $n_{i} \leq \frac{n}{2}$ the equality (\ref{form7}) holds only if $n_{i}=\frac{n}{2}$ for all $i=1,\dots,4$.  Substituting $\frac{n}{2}$  for $n_i$ in (\ref{sph}), we get  $gcd(\frac{n}{2},\frac{n}{2},\frac{n}{2},\frac{n}{2},n)=1$. Therefore, $n=2$. Since  $n_i<n$, we have $n_i=1$ for each  $i=\overline{1,4}$. Hence, $\lambda_{i}=\frac{2}{1}=2$. Since $d_{i}<\lambda_{i}$, we have $d_{i}=1$ for each $i=\overline{1,4}$. We obtain the complete characteristic of $(2,1,4,1,1,1,1,1,1,1,1)$, which we denote by $\kappa_1$.

				2 case: $k=3$.  Substituting $3$ for $k$ in (\ref{form7}), we get $n = \sum\limits_{i=1}^3{n_{i}}$.
		From this it is obviously that at least one $n_i$ is not less than $\frac{n}{3}$. For definiteness put $n_{1}\geq\frac{n}{3}$. Consider 2 admissible cases: a) $n_{1}=\frac{n}{3}$ and b) $n_{1}=\frac{n}{2}$.
		
			$a)$ $n_{1}=\frac{n}{3}$. Then $n_{2}+n_{3}=\frac{2n}{3}$. Therefore, at least one of $n_2,\,n_3$ is not less than $\frac{n}{3}$. For definiteness put $n_{2}\geq\frac{n}{3}$. Then,  $a_1)$ $n_{2}=\frac{n}{3}$ and $n_{3}=\frac{n}{3}$, or $a_2)$ $n_{2}=\frac{n}{2}$ and $n_{3}=\frac{n}{6}$. 
		
		$a_1)$ Substituting $\frac{n}{3}$  for $n_i$ in (\ref{sph}), we get $gcd(\frac{nd_1}{3},\frac{nd_2}{3}, \frac{nd_3}{3}, n)=1$.  Hence, $n=3$. Then, substituting the data $n_{1}=n_{2}=n_{3}=1$ for  $n_i$ in (\ref{form2}), we  get $ \sum\limits_{i=1}^3d_i\equiv 0\pmod n$. Due to $d_i\in\{1,2\}$ there are 2 admissible number collections:  $d_{1}=d_2=d_3=1$ (the complete characteristic $(3,1,3,1,1,1,1,1,1)$, which we denote by  $\kappa_2$), or $d_{1}=d_2=d_3=2$ (the complete characteristic $(3,1,3,1,1,1,2,2,2)$, which we denote by $\kappa_3$).

		$a_2)$ Taking into account  (\ref{sph}), we get $n=6$. Using (\ref{form2}) and the fact that $d_i\in\{1,\dots,5\}$, we obtain 2 admissible number collections:  $d_{1}=d_2=d_3=1$ (the complete characteristic $(6,1,3,3,2,1,1,1,1)$, which we denote by $\kappa_4$) and $d_1=1,\,d_2=2,\,d_3=5$ (the complete characteristic $(3,1,3,1,1,1,1,2,5)$, which we denote by $\kappa_5$).

		$b)$ $n_{1}=\frac{n}{2}$. Then, $n_{2}+n_{3}=\frac{n}{2}$, whence $\frac{n}{2}>n_{2}\geq\frac{n}{4}$. Therefore, $b_1)$ $n_{2}=\frac{n}{3}$ and $n_{3}=\frac{n}{6}$, or $b_2)$ $n_{2}=\frac{n}{4}$ and $n_{3}=\frac{n}{4}$. 
		
		$b_1)$ The complete characteristics for this number collection is found in $(a_2)$. 
		
		$b_2)$ For this number collections, taking into account  (\ref{sph}), we have $n=4$. Using (\ref{form2}) and the fact that $d_i\in\{1,\dots,5\}$, we obtain 2 admissible number collections: $d_1=d_2=d_3=1$ (the complete characteristic $(4,1,3,2,1,1,1,1,1)$, which we denote by $\kappa_6$) and $d_1=1,\,d_{2}=d_{3}=3$ (the complete characteristic $(4,1,3,2,1,1,1,3,3)$, which we denote by $\kappa_7$).

			{\bf Let us prove the sufficiency.} Let's show that for  each  complete characteristic  $\kappa_i$ ($i=\overline{1,7}$) there is an  orientation preserving periodic homeomorphism of the two-dimensional torus. Using the formula \ref{form}, we get that for such complete characteristics the genus of the modular surface is equal to $0$. Therefore, a modular surface is the sphere.

			Let us construct the homeomorphism $f_1$ satisfying the found complete characteristic $\kappa_1$. We mark on the sphere (modular surface) 4 points: $x_1, x_2, x_3, x_4$, each of which is the projection of  orbits with period less than the period of   the homeomorphism  $f_3$. Construct an arc with the beginning at the point $x_1$ and the end at the point $x_4$ on the sphere such that it contains points $x_2$ and $x_3$ and the point $x_2$ locates between the points $x_1$ and $x_3$ (see Fig. \ref{2per}$a$). Cutting the sphere along the constructed arc, we obtain a disk with the boundary $x_1x_2x_3x_4x_3x_2x_1$   (see Fig.\ref{2per}$b$). Gluing two such disks along the boundary $x_3x_4x_3$, we get a square (see Fig. \ref{2per}$c$). Having identified the sides of the square as shown in the figure \ref{2per}$c$), we obtain the two-dimensional torus (see Fig. \ref{2per}$d$).

	Define the homeomorphism $f_1$ by the following rule. Rotating the disk 1 in the figure \ref{2per}$c$) by the angle $\pi$ in the counterclockwise direction, we map it  into the disk 2. Similarly, we  map the disk 2 into the disk 1 by the angle $\pi$ in the counterclockwise direction. The described map is an orientation preserving periodic homeomorphism of the two-dimensional torus satisfying the complete characteristic  $\kappa_1$.

		 \begin{figure}[h!]\center{\includegraphics[width=1\linewidth]{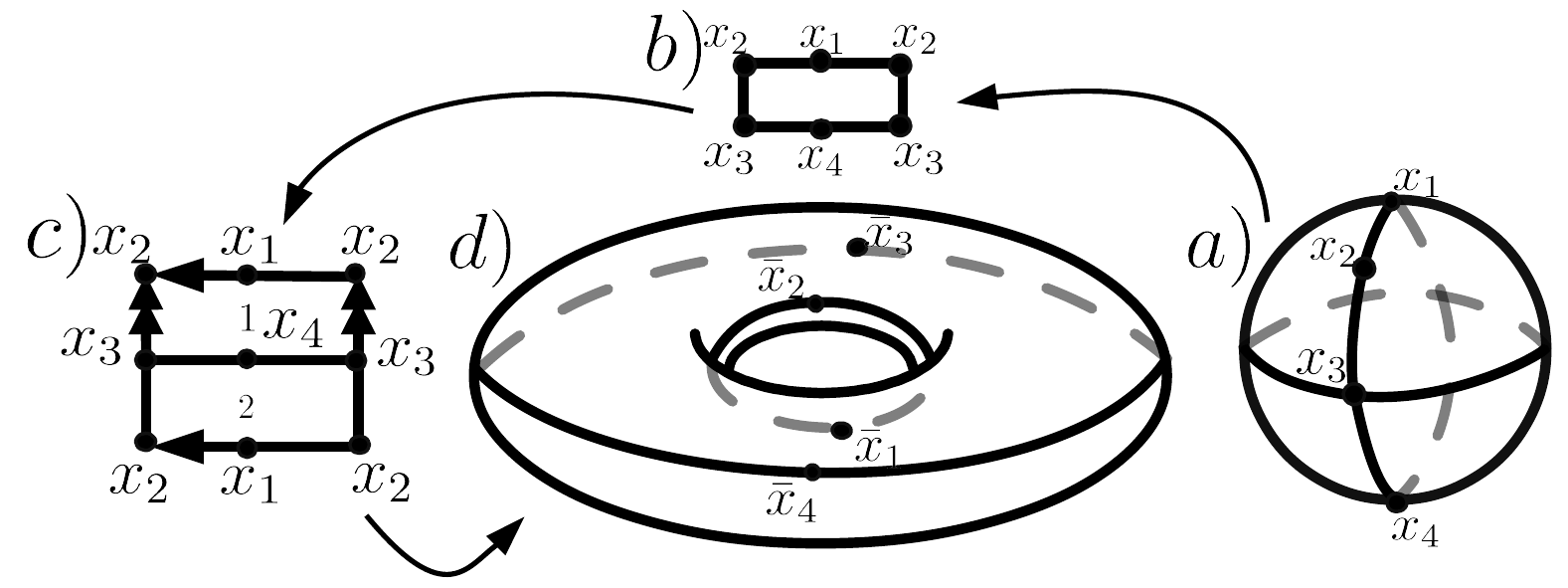}}\caption{The construction of the homeomorphism with the complete characteristic $\kappa_1$.}\label{2per}\end{figure}
	
		Let us construct homeomorphisms $f_2$ and $f_3$ satisfying  found complete characteristics $\kappa_2$ and  $\kappa_3$ respectively .We mark 3 points on the sphere: $x_1, x_2, x_3$, each of which is the projection of  orbits with period less than the period of  homeomorphisms  $f_2$ and $f_3$. Construct an arc with the beginning at the point $x_1$ and the end at the point $x_3$ on the sphere such that it contains the point $x_2$  (see Fig. \ref{3per}$a$). Cutting the sphere along the constructed arc, we obtain a disk with the boundary $x_1x_2x_3x_2x_1$   (see Fig.  \ref{3per}$b$). Gluing in pairs  three such disks along the boundary $x_1x_2$, we obtain a hexagon  (see Fig.  \ref{3per}$c$). Having identified the sides of the hexagon as shown in the figure \ref{3per}c), we obtain the two-dimensional torus (see Fig. \ref{3per}$d$). 
		 	
		 	Define the homeomorphism $f_2$ by the following rule.   Rotating the disk 1  in the figure \ref{3per}$c$)   by the angle $\frac{2\pi}{3}$in the counterclockwise direction, we map it  into the disk 2. Similarly, we map the  disk 2 into the disk 3, and the disk 3 into the disk 1. Figure \ref{3per}$e_1$) illustrates the action of the  map in the neighborhood of a fixed point. The described map is an orientation preserving periodic homeomorphism of the two-dimensional torus satisfying the complete characteristic $\kappa_2$. 
		 	
		 	 	Define the homeomorphism $f_3$ by the following rule.   Rotating the disk 1  in the figure \ref{3per}$c$)  by the angle $\frac{4\pi}{3}$ in the counterclockwise direction, we map it  into the disk 3. Similarly, we map the  disk 3 into the disk 2, and the disk 2 into the disk 1. Figure \ref{3per}$e_1$) illustrates the action of the  map in the neighborhood of a fixed point. The described map is an orientation preserving periodic homeomorphism of the two-dimensional torus satisfying the complete characteristic $\kappa_3$.

		 \begin{figure}[h!]\center{\includegraphics[width=1\linewidth]{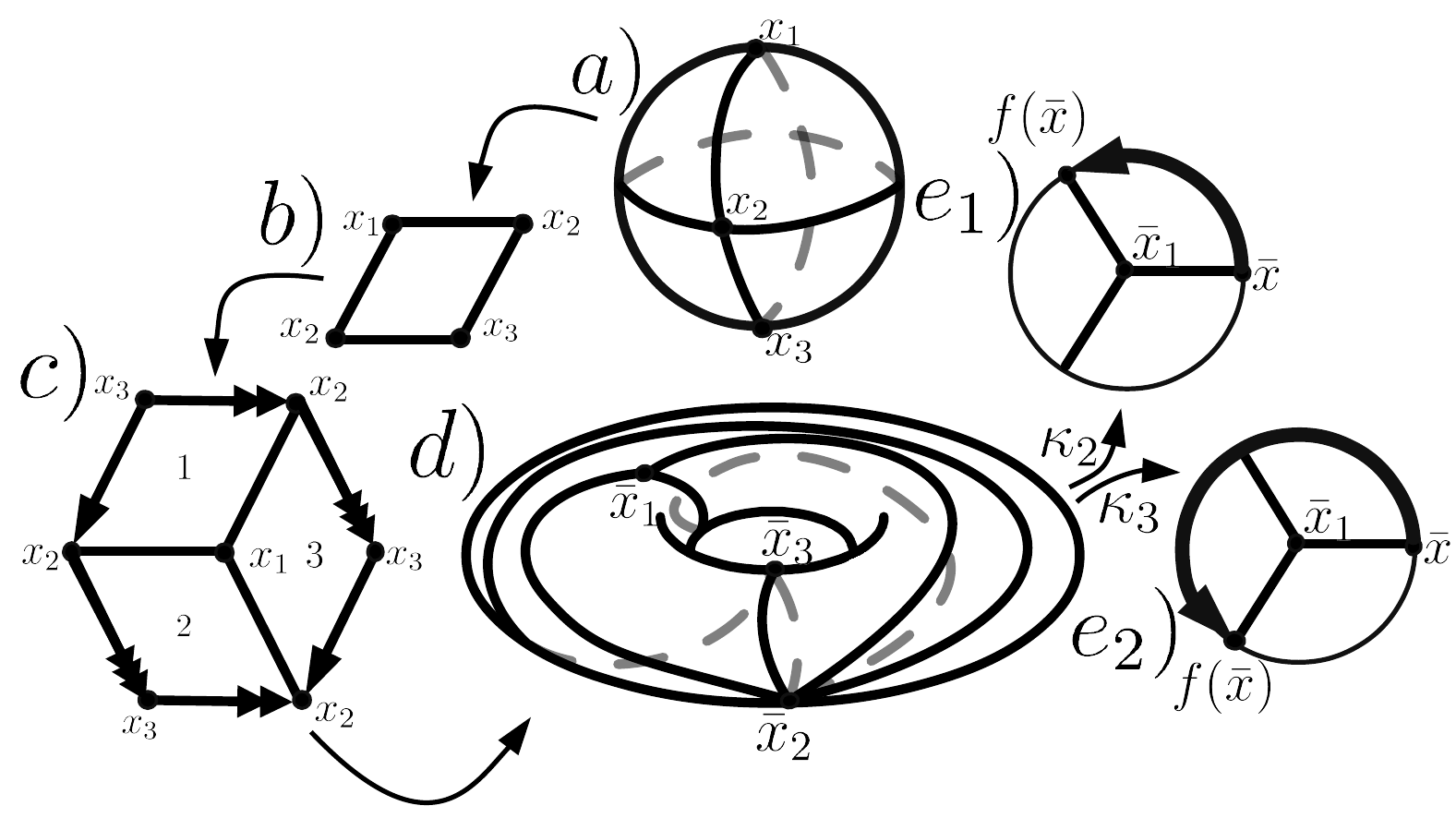}}\caption{The construction of homeomorphisms with complete characteristics $\kappa_2$ and $\kappa_3$. }\label{3per}\end{figure}

		Let us construct homeomorphisms $f_4$ and $f_5$  satisfying the found complete characteristics $\kappa_4$ and  $\kappa_5$ respectively. We mark 3 points on the sphere: $x_1, x_2, x_3$, each of which is the projection of orbits with period less than the period of   homeomorphisms  $f_4$ and $f_5$. Construct an arc with the beginning at the point $x_2$ and the end at the point $x_3$ on the sphere such  that it contains the point $x_2$  (see Fig. \ref{6per}$a$). Cutting the sphere along the constructed arc, we obtain a disk with the boundary $x_3x_1x_2x_1x_3$   (see Fig.  \ref{6per}$b$). Gluing in pairs six such disks  along the boundary $x_1x_3$, we get a polygon (see Fig. \ref{6per}$c$). Having identified the sides of  the polygon as shown in the figure \ref{6per}$c$), we obtain the two-dimensional torus.

		 	Define the homeomorphism $f_4$ by the following rule.   Rotating the disk 1 in the figure \ref{6per}$c$) by the angle $\frac{\pi}{3}$  in the counterclockwise direction, we map the disk 1 into the disk 2. Similarly, we  map the disk 2 into the disk 3, the disk 3 into disk 4, the disk 4 into the disk 5, the disk 5 into the disk 6, and the disk 6 into the disk 1. Figure \ref{6per}$d_1$) illustrates the location of  orbits with period less than the period of the homeomorphism $f_4$. The described map is an orientation preserving periodic homeomorphism of the two-dimensional torus satisfying the complete characteristic  $\kappa_4$.

		 	Define the homeomorphism $f_5$ by the following rule.   Rotating the disk 1 in the figure \ref{6per}$c$) by the angle $\frac{5\pi}{3}$  in the counterclockwise direction, we map the disk 1 into the disk 6. Similarly, we  map the disk 6 into the disk 5, the disk 5 into disk 4, the disk 4 into the disk 3, the disk 3 into the disk 2, and the disk 2 into the disk 1. Figure \ref{6per}$d_2$) illustrates the location of  orbits with period less than the period of the homeomorphism $f_5$. The described map is an orientation preserving periodic homeomorphism of the two-dimensional torus satisfying the complete characteristic  $\kappa_5$.

		 	 \begin{figure}[h!]\center{\includegraphics[width=1\linewidth]{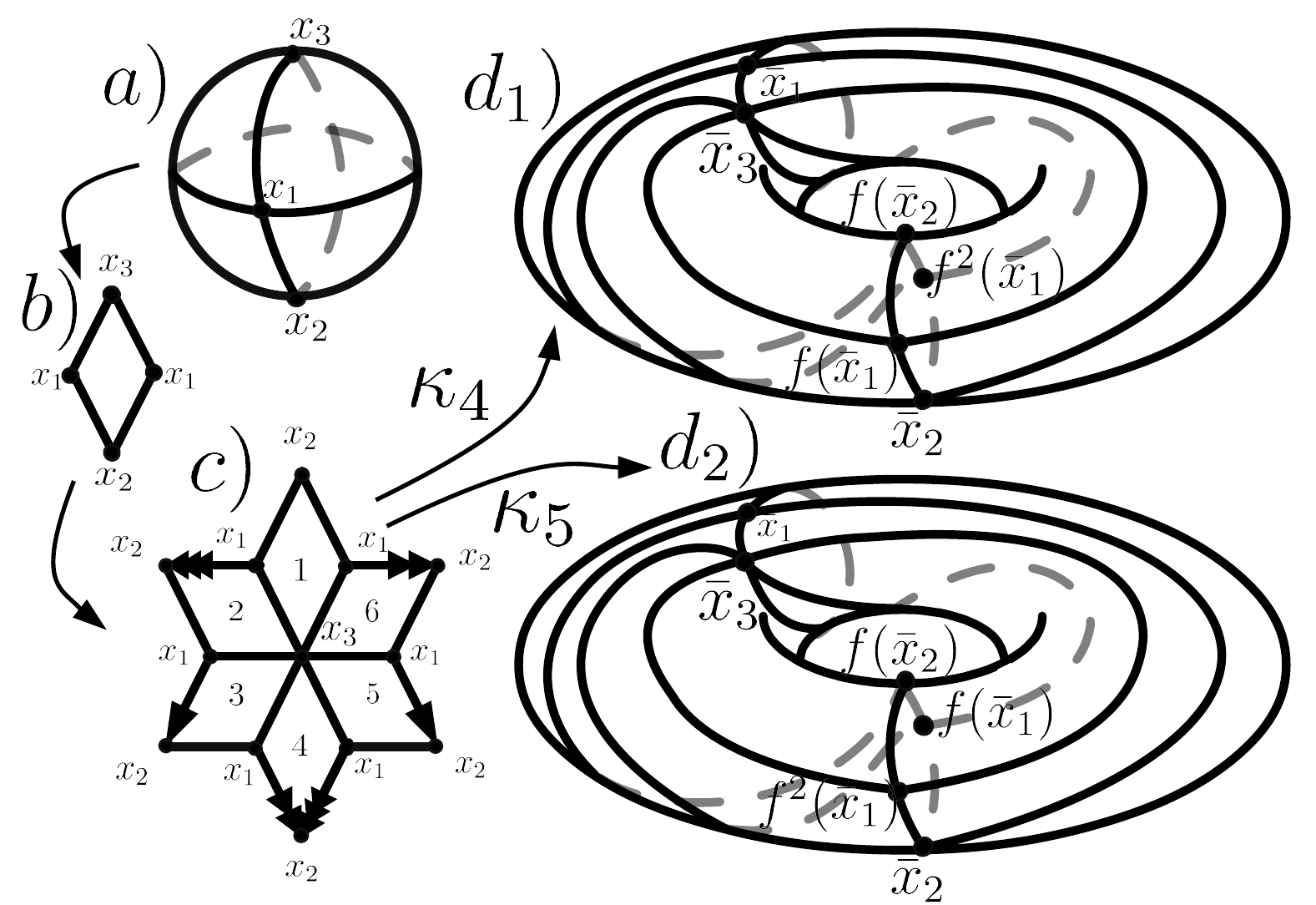}}\caption{The construction of homeomorphisms with complete characteristics $\kappa_4$ and $\kappa_5$. }\label{6per}\end{figure}

		Let us construct homeomorphisms $f_6$ and $f_7$ satisfying  found complete characteristics $\kappa_6$ and  $\kappa_7$ respectively. We mark 3 points on the sphere: $x_1, x_2, x_3$, each of which is the projection of  orbits with period less than the period of   homeomorphisms  $f_6$ and $f_7$. Construct an arc with the beginning at the point $x_2$ and the end at the point $x_3$ on the sphere such that it contains the point $x_3$ (see Fig. \ref{4per}$a$). Cutting the sphere along the constructed arc, we obtain a disk with the boundary $x_3x_1x_2x_1x_3$   (see Fig. \ref{4per}$b$). Gluing in pairs four such disks  along the boundary $x_1x_3$, we get a square (see Fig. \ref{4per}$c$).  Having identified the sides of  the square as shown in the figure \ref{4per}$c$), we obtain the two-dimensional torus (see Fig. \ref{4per}$d$). 
		 	
		 		Define the homeomorphism $f_6$ by the following rule.   Rotating the disk 1 in the figure \ref{4per}$c$)  by the angle $\frac{\pi}{4}$  in the counterclockwise direction, we map the disk 1 into the disk 2. Similarly, we map the disk 2 into the disk 3, the disk 3 into the disk 4, and the disk 4 into the disk 1. Figure \ref{4per}$e_1$) illustrates the action of the  map in the neighborhood of a fixed point.  The described map is an orientation preserving periodic homeomorphism of the two-dimensional torus satisfying the complete characteristic $\kappa_6$. 
		 		
		 			Define the homeomorphism $f_7$ by the following rule.   Rotating the disk 1  in the figure \ref{4per}$c$) by the angle $\frac{3\pi}{4}$   in the counterclockwise direction, we map the disk 1 into the disk 4. Similarly, we map the disk 4 into the disk 3, the disk 3 into the disk 2, and the disk 2 into the disk 1. Figure \ref{4per}$e_2$) illustrates the action of the  map in the neighborhood of a fixed point.  The described map is an orientation preserving periodic homeomorphism of the two-dimensional torus satisfying the complete characteristic $\kappa_7$.

		 		\begin{figure}[h!]\center{\includegraphics[width=1\linewidth]{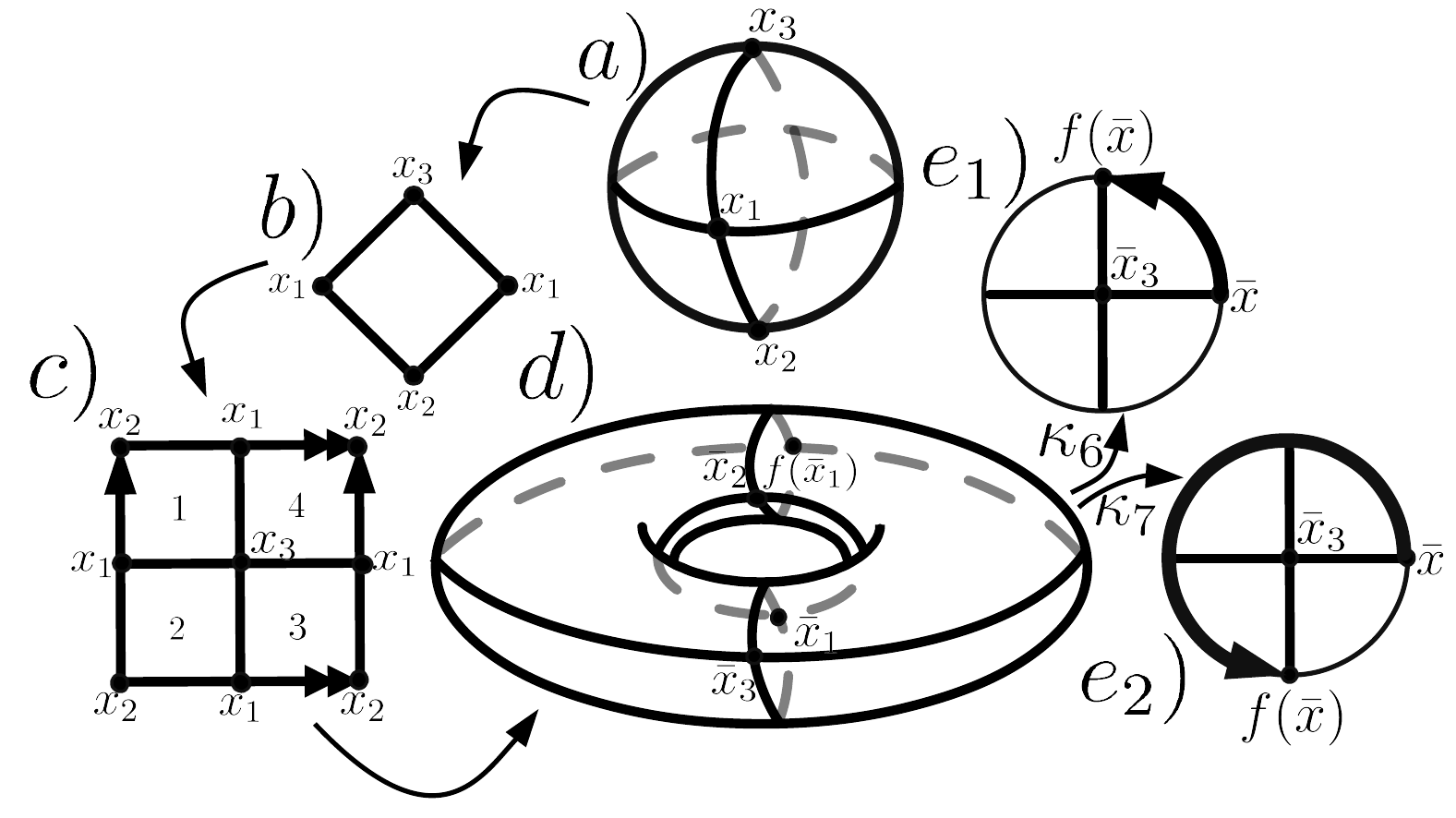}}\caption{The construction of homeomorphisms with complete characteristics  $\kappa_6$ and $\kappa_7$. }\label{4per}\end{figure}

\section{Classification of  homotopic to the identity periodic homeomorphisms of the two-dimensional torus }

	In this section we prove Theorem \ref{t1}.

Let us recall the concept of the index of a fixed point of a map and the Lefschetz-Hopf formula for  homotopic  to the identity map with a finite number of fixed points.

	Let the vector field $\xi(x,y)$ be defined and continuous at each of points of the plane $\mathbb{R}^2$ except maybe some points. Denote by $M_0(x_{0},y_0)$ the singular point of the vector field $\xi(x,y)$.  Choose   $r>0$ such that the disk $d_r=\{(x,y)\in\mathbb R^2: \sqrt{(x-x_{0})^2+(y-y_0)^2}\leqslant r\}$  does not contain singular points of the vector field different from the point  $M_0(x_{0},y_0)$. Then, on the set of points $d_r$ the vector field  $\xi(x,y)$ is determined by the transformation $f$ and is given by the formula $\xi(x,y)=(x,y)-f(x,y)$. The number of rotations of the vector field $\xi|_{\partial d_r}$ in the counterclockwise direction in the neighborhood of the point $M_0$  is called  the {\it  index of the singular point} $M_{0}$ and is denoted by $I(M_{0})$. For a continuous map  defined on a surface the fixed point index is calculated as the index of the singular point of the local representation of the map  in an isolated neighborhood of this point.

	Consider $\lambda$ and $\delta$ such that $\lambda\in\mathbb{N}$, $\delta\in\{1,\dots,\lambda-1\}$ and $(\delta,\lambda)=1$. Then, the index of the fixed point of the plane rotation by  the angle $\frac{2\pi \delta}{\lambda}$  is equal to $\delta$  if $\delta\leqslant\frac{\lambda}{2}$ and is equal to $-\delta$ if $\delta>\frac{\lambda}{2}$. For the surface $S$ of the genus $p$  homotopic to the identity map $f:S\to S$ with a finite number of fixed points $x_{1}, ..., x_{k}$ we have {\it Lefschetz-Hopf formula} \cite{LX}:
	\begin{equation}\label{form8}
		\sum\limits_{i=1}^k I(x_i)=2-2p
	\end{equation}

	{\bf Proof of Theorem \ref{t1}.}  Let $f:\mathbb T^2\to\mathbb T^2$ be an  orientation preserving periodic homeomorphism  of period $n\in\mathbb N$. 
	
	Let us prove that the following implications are true: $1\rightarrow 2,\,2\rightarrow 3,\,3\rightarrow 4,\,4\rightarrow 1$.
		
		$1\rightarrow 2.$ Assume the converse. Then, $\bar B_f \neq \emptyset$. In this case taking into acount  Theorem \ref{t2}, we have 7 admissible complete characteristics $\kappa_j$ $(j=\overline{1,7})$  for the homeomorphism $f$. In each of these cases let $n_*$ be the largest period of orbits of $\bar B_f $. Consider the homeomorphism $\phi=f^{n_*}$. On the one hand, the homeomorphism $\phi$ is isotopic to the identity because $\phi$ is the degree of isotopic to the identity map. On the other hand, summing  indices of  fixed points, we get a nonzero value, which contradicts the Lefschetz-Hopf formula (\ref{form8}). This contradiction proves the implication.
		
		$2\rightarrow 3.$ If $\bar B_f=\emptyset$, then it follows from  Lemma \ref{co1}   that    $p-1=n(g-1)$. Substituting $1$ for $p$ in this equality, we obtain  $n(g-1)=0$. Therefore, $g=1$.
		
		$3\rightarrow 4.$  Substituting $1$ for $g$ and $1$ for $p$ in (\ref{form}), we get   $\sum\limits_{i=1}^k{n_{i}}=nk$.  Using \ref{form5}, we have    $\sum\limits_{i=1}^k{n_{i}}\leqslant\frac{nk}{2}$. Therefore, $k=0$ and  $\bar B_f=\emptyset$. It follows from statement  \ref{free} that the periodic homeomorphism $f$ of period $n$ is topologically conjugate to the diffeomorphism $\Psi_{n}\left(e^{i2x\pi},e^{i2y\pi}\right)= \left(e^{i2\pi\left(x+\frac{1}{n}\right)},e^{i2y\pi}\right)$, which has  period $n$. 
		
		$4\rightarrow 1.$ By construction, the diffeomorphism $\Psi_{n}\left(e^{i2x\pi},e^{i2y\pi}\right)= \left(e^{i2\pi\left(x+\frac{1}{n}\right)},e^{i2y\pi}\right)$ acts identically in the fundamental group and, therefore, it is isotopic to the identity map  (see, for example, \cite{Ro}).

	The following statement is a corollary to  Theorem \ref{t2} and Theorem \ref{t1}.
	
	\begin{corollary}\label{ng} Periodic homeomorphisms of the two-dimensional torus satisfying complete characteristics $\kappa_j$ $(j=\overline{1,7})$ and only they are non-homotopic to the identity preserving orientation periodic homeomorphisms of the two-dimensional torus. \end{corollary}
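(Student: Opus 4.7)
\textbf{Proof proposal for Corollary \ref{ng}.} The plan is to deduce this by chaining the two main theorems already established, with no new geometric or algebraic input required. I would proceed by characterizing ``non-homotopic to the identity'' in two steps: first convert it to a statement about $\bar B_f$ via Theorem \ref{t1}, then convert the statement about $\bar B_f$ into a statement about the complete characteristic via Theorem \ref{t2}.

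More precisely, let $f:\mathbb T^2\to\mathbb T^2$ be an orientation preserving periodic homeomorphism. By the equivalence $(1)\Leftrightarrow(2)$ in Theorem \ref{t1}, $f$ is homotopic to the identity if and only if $\bar B_f=\emptyset$; taking the contrapositive, $f$ is non-homotopic to the identity if and only if $\bar B_f\neq\emptyset$. I would then invoke Theorem \ref{t2}, which asserts that there exists an orientation preserving periodic homeomorphism of $\mathbb T^2$ with $\bar B_f\neq\emptyset$ if and only if its complete characteristic coincides with exactly one of $\kappa_1,\dots,\kappa_7$. Combining the two equivalences yields that $f$ is non-homotopic to the identity if and only if its complete characteristic is one of $\kappa_1,\dots,\kappa_7$, which is precisely the statement of the corollary.

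Since every step is just a citation of an already proved equivalence, there is no substantive obstacle: the whole argument reduces to reading Theorem \ref{t1} in its contrapositive form and composing it with Theorem \ref{t2}. The only minor bookkeeping point I would be careful about is stating the direction of each implication explicitly, so that the ``and only they'' clause of the corollary is visibly obtained from the ``if and only if'' in Theorem \ref{t2} rather than from a one-sided application.
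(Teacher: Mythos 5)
Your proposal is correct and matches the paper's intent exactly: the paper offers no separate argument, merely noting that the corollary follows from Theorems \ref{t2} and \ref{t1}, and your chaining of the equivalence $(1)\Leftrightarrow(2)$ of Theorem \ref{t1} with the classification in Theorem \ref{t2} is precisely that deduction. No further comment is needed.
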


	In each topological conjugacy class of preserving orientation homotopic to the identity periodic homeomorphisms of the two-dimensional torus  there is the representative  $f_{\alpha}:\mathbb{T}^2\to \mathbb{T}^2$, which is a translation of the two-dimensional torus by the vector $\alpha=(\alpha_1,\alpha_2)$  defined by the formula $f_{\alpha}(\bar x, \bar y)=((x + \alpha_1) \pmod 1; (y + \alpha_2) \pmod 1)$, where $\alpha_1,\alpha_2\in\mathbb{Q}$.  Necessary and sufficient conditions for the conjugacy of two translations on the 2-torus are found in \cite{KR}.

	\section{Classification of periodic algebraic automorphisms of the two-dimensional torus}\label{Kat}

	Let $A\in Gl(2,\mathbb{Z})$ and  $f_A : \mathbb{T}^2 \rightarrow \mathbb{T}^2$   is  an  algebraic automorphism of the two-dimensional torus induces by $A$.

 If eigenvalues of $A\in Gl(2,\mathbb{Z})$ are not equal in modulus to unity, then the algebraic automorphism of the two-dimensional torus induced by the matrix $A$ is called a {\it hyperbolic} algebraic automorphism of the two-dimensional torus. Otherwise, the automorphism $f_A$ is called a  {\it non-hyperbolic} algebraic automorphism of the two-dimensional torus.

Two algebraic automorphisms of the 2-torus $f$ and $g$ are called {\it conjugate}, if there exists an automorphism $h$ such that $gh=hf$.  

The set $K_f=\{hfh^{-1} | h \;  {\text is  \;  an \; algebraic \;  automorphism \;  of \;  the  \;   2-torus}\} $ is called the  {\it  conjugacy class} of the automorphism $f$.

We denote by $\mathbb{Z}^{2\times 2} $  set of integer matrices of order $2$.   The matrix $B\in\mathbb{Z}^{2\times 2}$   is called a {\it similar over $\mathbb{Z}$} to the matrix $A\in\mathbb{Z}^{2\times 2} $  , if there exists   matrix $S\in Gl(2,\mathbb{Z})$ such that $B=S^{-1}AS$. 
If $A\in\mathbb{Z}^{2\times 2}$, then the set $K_A=\{S^{-1}AS\ |\ S\in Gl(2,\mathbb{Z})\} $ is called the {\it similarity class of the matrix $A$}.

Thus, the problem of finding the conjugacy classes of non-hyperbolic algebraic automorphisms of the two-dimensional torus is reduced to the problem of finding the similarity classes of second-order integer unimodular matrices, whose eigenvalues are equal in modulus to unity. The problem of finding similarity classes for second-order integer unimodular matrices whose eigenvalues are roots of unity was solved in \cite{Batterson} in the form of the following statement:

\begin{statement}\label{Batt}

Let $A\in Gl(2,\mathbb{Z})$ and suppose that both eigenvalues of $A$ are roots of unity. Then $A$ is similar over  $\mathbb{Z}$ to exactly one of the following matrices:

\begin{equation*}
\begin{pmatrix} 
   1 & m \\
   0 & 1 
\end{pmatrix} ,
\begin{pmatrix} 
   -1 & m \\
   0 & -1 
\end{pmatrix},
\begin{pmatrix} 
   1  & 0 \\
   0 & -1 
\end{pmatrix}
,
\begin{pmatrix} 
   1  & 1 \\
   0 & -1 
\end{pmatrix},
\end{equation*}
\begin{equation*}
\begin{pmatrix} 
   0  & 1 \\
  -1& 0
\end{pmatrix},
\begin{pmatrix} 
   0  & 1 \\
   -1 & -1 
\end{pmatrix},
\begin{pmatrix} 
   0  & -1 \\
  1& 1
\end{pmatrix}, m\in\{0,1,2,\dots\} .
\end{equation*}

\end{statement}

\begin{lemma}\label{sqrt}

The eigenvalues of second-order integer unimodular matrices are equal to the root of unity if and only if they are equal in modulus to unity.

\end{lemma}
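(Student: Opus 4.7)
The plan is to exploit the fact that for any $A\in Gl(2,\mathbb{Z})$ the characteristic polynomial has the form $\chi_A(x)=x^2-tx+d$ with $t=\operatorname{tr}A\in\mathbb{Z}$ and $d=\det A\in\{-1,+1\}$. The forward implication is trivial, since every root of unity lies on the unit circle, so all the work goes into the converse.

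Assume therefore that the two eigenvalues $\lambda_1,\lambda_2$ satisfy $|\lambda_1|=|\lambda_2|=1$. I would split on whether they are real. If both are real then $\lambda_i\in\{-1,+1\}$, and these are roots of unity. If they are non-real they must be complex conjugates, so $\lambda_1\lambda_2=|\lambda_1|^2=1$, forcing $d=+1$, and $t=\lambda_1+\overline{\lambda_1}=2\operatorname{Re}\lambda_1\in[-2,2]\cap\mathbb{Z}$. Since a non-real complex pair excludes the double roots at $\pm 1$, the admissible traces reduce to $t\in\{-1,0,1\}$.

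For each of these three values I would simply solve $x^2-tx+1=0$ and recognize the roots. Concretely, $t=0$ gives $\lambda=\pm i$ (fourth roots of unity), $t=1$ gives $\lambda=e^{\pm i\pi/3}$ (primitive sixth roots of unity), and $t=-1$ gives $\lambda=e^{\pm 2\pi i/3}$ (primitive cube roots of unity). In every case the eigenvalues are roots of unity, which completes the converse.

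There is no real obstacle here: the argument hinges only on the integrality of $\operatorname{tr}A$ together with the estimate $|\operatorname{tr}A|\le|\lambda_1|+|\lambda_2|=2$, which forces the characteristic polynomial to belong to a finite explicit list. The mildly delicate point to state cleanly is the dichotomy between the real case (where $\pm 1$ are the only candidates) and the complex-conjugate case (where $d=1$ is forced automatically and the finitely many integer traces are enumerated), so as to avoid any tacit assumption that the eigenvalues are non-real.
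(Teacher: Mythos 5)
Your proof is correct, but it takes a different route from the paper. The paper disposes of the converse in one stroke by invoking Kronecker's theorem (an algebraic integer all of whose conjugates lie on the unit circle is a root of unity), citing the 1857 paper; the only computation needed is $|\lambda_1||\lambda_2|=|\det A|=1$. You instead give a self-contained elementary argument: integrality of the trace plus the bound $|\operatorname{tr}A|\le|\lambda_1|+|\lambda_2|=2$ confine the characteristic polynomial to a finite list, and you check each case by hand, correctly separating the real case ($\lambda=\pm1$) from the complex-conjugate case (where $\det A=1$ is forced and $t\in\{-1,0,1\}$). Your version costs a short case analysis but avoids any external citation and, as a bonus, exhibits the eigenvalues explicitly as $4$th, $6$th, and $3$rd roots of unity --- which foreshadows the periods $2,3,4,6$ that appear in the classification later in the paper. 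The paper's version is shorter and generalizes verbatim to $n\times n$ unimodular integer matrices, where your trace enumeration would no longer suffice. Both arguments are complete; there is no gap in yours.
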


\begin{proof}
	 Let $ A= \begin{pmatrix} 
   a  & b \\
   c & d 
\end{pmatrix} \in Gl(2,\mathbb{Z}) $. Then, $\Delta=ad-bc$ is a   
determinant of the matrix $A$ ($\Delta=\pm1$) and $f(\lambda)=\lambda^2-\lambda(a+d)+ad-bc$ is a
characteristic polynomial of the matrix $A$. Denote by  $\lambda_{1,2}$    eigenvalues of the matrix $A$.

Suppose that there exists $n\in\mathbb{N}$ such that $\lambda_i^n=1$ $(i\in\{1,2\})$. Then, $|\lambda_i|=1$.

Let us prove the statement in the opposite direction. Let $|\lambda_i|=1$ $(i\in\{1,2\})$.

In  \cite{Kronecker},  Kronecker proved that if each of roots of an integer polynomial with the leading coefficient $1$ are equal modulo to unity, then these roots are roots of unity. Since $a,b,c,d\in\mathbb{Z}$ and  $|\lambda_1||\lambda_2|=|ad-bc|=1$, we have that all eigenvalues of the matrix $ A $ are roots of unity.

\end{proof}

The next corollaries  follow from  Statement \ref{Batt} and   Lemma  \ref{sqrt}.

\begin{corollary} Each conjugacy class of non-hyperbolic algebraic automorphisms of the two-dimensional torus is given by exactly one of the following matrices:

\begin{equation*}
M_1(m)=\begin{pmatrix} 
   1 & m \\
   0 & 1 
\end{pmatrix} ,
M_2(m)=\begin{pmatrix} 
   -1 & m \\
   0 & -1 
\end{pmatrix},
M_3=\begin{pmatrix} 
   1  & 0 \\
   0 & -1 
\end{pmatrix}
,
M_4=\begin{pmatrix} 
   1  & 1 \\
   0 & -1 
\end{pmatrix},
\end{equation*}
\begin{equation*}
M_5=\begin{pmatrix} 
   0  & 1 \\
  -1& 0
\end{pmatrix},
M_6=\begin{pmatrix} 
   0  & 1 \\
   -1 & -1 
\end{pmatrix},
M_7=\begin{pmatrix} 
   0  & -1 \\
  1& 1
\end{pmatrix}, m\in\{0,1,2,\dots\} .
\end{equation*}

\end{corollary}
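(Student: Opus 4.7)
The plan is to reduce the corollary to a combination of Lemma \ref{sqrt} and Statement \ref{Batt}, once the notion of conjugacy by algebraic automorphisms has been translated into the notion of similarity over $\mathbb{Z}$ used in Statement \ref{Batt}.

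First I would set up the dictionary between the two notions. A direct computation from the defining formula of $f_A$ gives $f_A\circ f_B=f_{AB}$ for every $A,B\in Gl(2,\mathbb{Z})$, and in particular $f_H\circ f_A\circ f_H^{-1}=f_{HAH^{-1}}$ for any $H\in Gl(2,\mathbb{Z})$. Since every algebraic automorphism of $\mathbb{T}^2$ is of the form $f_H$ for a unique $H\in Gl(2,\mathbb{Z})$, the automorphisms $f_A$ and $f_B$ lie in the same conjugacy class $K_{f_A}$ if and only if there exists $H\in Gl(2,\mathbb{Z})$ with $B=HAH^{-1}$; putting $S=H^{-1}$ this is exactly the relation $B=S^{-1}AS$, i.e.\ similarity of $A$ and $B$ over $\mathbb{Z}$. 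Consequently the map $A\mapsto f_A$ descends to a bijection between similarity classes $K_A$ in $Gl(2,\mathbb{Z})$ and conjugacy classes $K_{f_A}$ of algebraic automorphisms.

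Next I would translate the non-hyperbolic hypothesis. By definition, $f_A$ is non-hyperbolic precisely when both eigenvalues of $A$ have modulus one; by Lemma \ref{sqrt} this is equivalent to both eigenvalues being roots of unity. Hence Statement \ref{Batt} applies and says that $A$ is similar over $\mathbb{Z}$ to exactly one matrix from the list $M_1(m),\,M_2(m),\,M_3,\,M_4,\,M_5,\,M_6,\,M_7$ with $m\in\{0,1,2,\dots\}$. Transporting this via the dictionary established in the previous paragraph, each conjugacy class $K_{f_A}$ of non-hyperbolic algebraic automorphisms contains exactly one of the automorphisms $f_{M_1(m)},\,f_{M_2(m)},\,f_{M_3},\,f_{M_4},\,f_{M_5},\,f_{M_6},\,f_{M_7}$, which is precisely the content of the corollary.

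I expect essentially no genuine obstacle here: the two substantive inputs (Batterson's theorem for $\mathbb{Z}$-similarity classes and the Kronecker-based Lemma \ref{sqrt}) are already in hand, and only the bookkeeping step relating algebraic conjugacy on $\mathbb{T}^2$ to $\mathbb{Z}$-similarity of the inducing matrices needs to be spelled out. The one point to be careful about is that the uniqueness part of the corollary (the word \emph{exactly}) uses the uniqueness already asserted in Statement \ref{Batt}, which in turn is transported across the bijection $A\leftrightarrow f_A$ without change.
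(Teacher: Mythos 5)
Your argument is correct and is essentially the paper's own: the authors explicitly state that the problem of classifying conjugacy classes of non-hyperbolic algebraic automorphisms reduces to classifying $\mathbb{Z}$-similarity classes of unimodular integer matrices with eigenvalues of modulus one, and then derive the corollary from Statement \ref{Batt} together with Lemma \ref{sqrt}. You have merely spelled out the bookkeeping ($f_A\circ f_B=f_{AB}$, hence conjugacy of automorphisms corresponds to similarity of matrices) that the paper leaves implicit.
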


Recall that a  non-identity matrix $A$ is called {\it periodic}  if there exists a number $n\in\mathbb{N}$ such that $A^n=E$. The smallest of such $n$ is called the {\it  period} of the matrix $A$.

By direct calculation, we have that for $m\neq 0$  matrices $M_1(m)$ and $M_2(m)$  are not periodic. For $m=0$, the matrix $M_1(m)$ is identical and the matrix $M_2(m)$ is a matrix of period 2. Matrices $M_3$ and $M_4$ are also matrices of period 2. Periods of   matrices  $M_5$, $M_6$  and $M_7$ are equal to 4, 3 and 6 respectively.

\begin{corollary} There are 6 classes of periodic algebraic automorphisms of the two-dimensional torus, each of which is given by exactly one of the following matrices:

\begin{equation*}
M_2(0)=\begin{pmatrix} 
   -1 & 0 \\
   0 & -1 
\end{pmatrix},
M_3=\begin{pmatrix} 
   1  & 0 \\
   0 & -1 
\end{pmatrix}
,
M_4=\begin{pmatrix} 
   1  & 1 \\
   0 & -1 
\end{pmatrix},
\end{equation*}
\begin{equation*}
M_5=\begin{pmatrix} 
   0  & 1 \\
  -1& 0
\end{pmatrix},
M_6=\begin{pmatrix} 
   0  & 1 \\
   -1 & -1 
\end{pmatrix},
M_7=\begin{pmatrix} 
   0  & -1 \\
  1& 1
\end{pmatrix} .
\end{equation*}

\end{corollary}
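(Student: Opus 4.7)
The plan is to read off the periodic classes from the preceding corollary by direct inspection of each of the seven families of representatives.

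First I would observe two reductions. Periodicity is a conjugacy invariant: if $A^n=E$ and $B=S^{-1}AS$ with $S\in Gl(2,\mathbb{Z})$, then $B^n=S^{-1}A^nS=E$, so it is well-defined to speak of the period of a conjugacy class. Moreover, any periodic matrix is automatically non-hyperbolic, because $A^n=E$ forces each eigenvalue $\lambda$ of $A$ to satisfy $\lambda^n=1$, hence $|\lambda|=1$. Together with the previous corollary, this means the conjugacy class of a periodic algebraic automorphism must be represented by exactly one of the seven matrices $M_1(m)$, $M_2(m)$, $M_3$, $M_4$, $M_5$, $M_6$, $M_7$, and it only remains to determine which of these representatives are periodic.

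The bulk of the argument is a short direct computation. For the parabolic family, an easy induction gives
\begin{equation*}
M_1(m)^n=\begin{pmatrix} 1 & nm \\ 0 & 1 \end{pmatrix},
\end{equation*}
so $M_1(m)^n=E$ forces $nm=0$; when $m=0$ the representative is the identity (excluded by the definition of periodicity), and when $m\neq 0$ no positive power is the identity. An analogous calculation for $M_2(m)$ yields $M_2(m)^{2k}=\begin{pmatrix}1 & -2km\\0 & 1\end{pmatrix}$ and an odd-power shape with $-1$'s on the diagonal, so $M_2(m)$ is periodic only when $m=0$, in which case $M_2(0)^2=E$ (period $2$). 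The remaining five matrices are then checked in finitely many multiplications: $M_3^2=E$ and $M_4^2=E$ (period $2$); $M_5^2=-E$, $M_5^4=E$ (period $4$); $M_6^3=E$ with $M_6,M_6^2\neq E$ (period $3$); and $M_7^3=-E$, $M_7^6=E$ (period $6$).

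The main (and essentially only) obstacle is handling the triangular families $M_1(m)$ and $M_2(m)$, where one must rule out periodicity for all positive values of the parameter $m$; but this is transparent once the closed form for the $n$-th power is written down. Assembling the surviving representatives gives exactly the six matrices $M_2(0)$, $M_3$, $M_4$, $M_5$, $M_6$, $M_7$ claimed in the statement, and the uniqueness clause in the previous corollary guarantees that these six conjugacy classes are pairwise distinct.
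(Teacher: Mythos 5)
Your proposal is correct and follows essentially the same route as the paper: the paper also reduces to the list of representatives from the preceding corollary (noting that periodicity forces eigenvalues to be roots of unity) and then determines by direct calculation that $M_1(m)$, $M_2(m)$ are non-periodic for $m\neq0$, that $M_2(0)$, $M_3$, $M_4$ have period $2$, and that $M_5$, $M_6$, $M_7$ have periods $4$, $3$, $6$. Your explicit closed form for $M_1(m)^n$ and $M_2(m)^{2k}$ just makes precise the step the paper dispatches with the phrase \enquote{by direct calculation}.
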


Put $A_1=M_2(0)$, $A_2=M_6^{-1}$, $A_3=M_6$, $A_4=M_7$, $A_5=M_7^{-1}$, $A_6=M_5^{-1}$, $A_7=M_5$.

	{\bf Proof of Theorem \ref{t3}.}  
It follows from statement \ref{clNi}   that two periodic surface transformations are conjugate by an orientation preserving homeomorphism if and only if their complete characteristics coincide. It follows from Corollary \ref{ng} that any orientation preserving non-homotopic to the identity periodic homeomorphism of the two-dimensional torus satisfies exactly one of the complete characteristics $\kappa_j$  $(j=\overline{1,7})$. Let us show that the map $f_{A_j}$  has the complete characteristic $\kappa_j$  and, therefore, any orientation preserving non-homotopic to the identity periodic homeomorphism of the two-dimensional torus   conjugates by means of an orientation preserving homeomorphism with exactly one automorphism $f_{A_j}$.

Let us find the complete characteristic of the periodic transformation $ f_ {A_5} $ induced by the matrix
$A_5=\begin{pmatrix}
			1 & 1\\
			-1 & 0\\
		\end{pmatrix}$.  The complete characteristics of   automorphisms given by   remaining matrices are found in a similar way.

The period of the $A_5$  is 6. Consequently, it induces a periodic homeomorphism of the two-dimensional torus $f_{A_5}$ of period 6. 

Let us find the set $\bar B_{f_ {A_5}}$ with period less than the period of the homeomorphism $f_{A_5}$.  As mentioned before,   periods of such points  are divisors of six, that is, they are fixed or have the period  2 or 3.

We find fixed points from the system

\begin{equation*}
 \begin{cases}
   x=x+y \pmod 1
   \\
   y=-x  \pmod 1
 \end{cases},
 \end{equation*}
 
decomposing into a countable set of systems

\begin{equation*}
 \begin{cases}
   x+k=x+y, k\in\mathbb{Z}
   \\
   y+m=-x, m\in\mathbb{Z}
 \end{cases}.
 \end{equation*}

These systems are equivalent to

\begin{equation*}
 \begin{cases}
   x=-m-k
   \\
   y=k
 \end{cases} ,  k\in\mathbb{Z}, m\in\mathbb{Z} .
 \end{equation*}
 
Whence, by direct calculation we see that the map  $f_{A_5}$ has a unique fixed point $p(0,0)$, where $p:\mathbb{R}^2 \rightarrow \mathbb{T}^2$ is the natural projection.

 Finding   fixed points of the map  $f_{A_5}^2$, induced by the matrix  $A_5^2=
\begin{pmatrix} 
  0 & 1 \\
   -1 & -1 
\end{pmatrix}$, we get points of period 2 of the map  $f_{A_5}$.

By analogy with finding fixed points of the map $f_{A_5}$, we consider a countable set of systems

\begin{equation*}
 \begin{cases}
   x+k=y, k\in\mathbb{Z}
   \\
   y+m=-x-y, m\in\mathbb{Z}
 \end{cases}. 
 \end{equation*}

These systems are equivalent to

\begin{equation*}
 \begin{cases}
   x=\frac{1}{3}(-2m-k)
   \\
    y=\frac{1}{3}(k-m)
 \end{cases} ,  k\in\mathbb{Z}, m\in\mathbb{Z} .
 \end{equation*}

 Whence, by direct calculation we make sure that the map  $f_{A_5}^2$ has three fixed points $p(0,0),p(\frac{1}{3},\frac{1}{3}), p(\frac{2}{3},\frac{2}{3})$. Therefore, the map  $f_{A_5}$ has a unique orbit of period two $\mathcal O_2=\{p(\frac{1}{3},\frac{1}{3}), p(\frac{2}{3},\frac{2}{3})\}$.

 Finding fixed points of the map  $f_{A_5}^3$ induced by the matrix  $A_5^3=
\begin{pmatrix} 
   -1 & 0 \\
   0& -1 
\end{pmatrix}$, we get points of period 3 of the map  $f_{A_5}$.

By analogy with finding fixed points of the map $f_{A_5}$, we consider a countable set of systems

\begin{equation*}
 \begin{cases}
   x+k=-x, k\in\mathbb{Z}
   \\
   y+m=-y, m\in\mathbb{Z}
 \end{cases}.
 \end{equation*}

These systems are equivalent to 

 \begin{equation*}
 \begin{cases}
   x=-\frac{1}{2}k
   \\
   y=-\frac{1}{2}m
 \end{cases}   ,  k\in\mathbb{Z}, m\in\mathbb{Z} .
 \end{equation*}

Whence,  by direct calculation we have  that the map $f_{A_5}^3$ has 4 fixed points: $p(0,0),p(\frac{1}{2},\frac{1}{2}), p(\frac{1}{2},0), p(0,\frac{1}{2})$. Therefore, the map $f_{A_5}$ has a unique orbit $\mathcal O_3=\{p(\frac{1}{2},\frac{1}{2}), p(\frac{1}{2},0), p(0,\frac{1}{2})\}$ of period 3.

Thus, the set $\bar B_{f_{A_5}}$ of the map $f_{A_5}$ consists of three orbits:
 $\mathcal O_1=\{p(0,0)\}, \mathcal O_2=\{p(\frac{1}{3},\frac{1}{3}), p(\frac{2}{3},\frac{2}{3})\}, \mathcal O_3=\{p(\frac{1}{2},\frac{1}{2}), p(\frac{1}{2},0), p(0,\frac{1}{2})\}$.

Consider the map  $\hat A_1:
 \begin{cases}
   \overline{x}=x+y
   \\
  \overline{y}=-y
 \end{cases}.$
 This map is covering with respect to $f_{A_5}$.  The point $O_1(0,0)$ is fixed under $\hat A_1$.

In some neighborhood of the point $O_1$ we fix the point $Q(1,0)$.  Under the action of $\hat A_1$ the orbit of the point $O_1$ is the set 
\begin{equation*}
\mathcal{O}_{Q}=\{(1,0),(0,1),(-1,1),(-1,0),(0,-1),(1,-1)\}. \end{equation*}  Connect  each of points of the set $\mathcal{O}_{Q}$ by means of segment to the point $O_1$. Consider a circle of center $O_1$ such that it intersects each of constructed segments at some point  (see Fig. \ref{A1}). 

\begin{figure}[h!]\center{\includegraphics[width=0.5\linewidth]{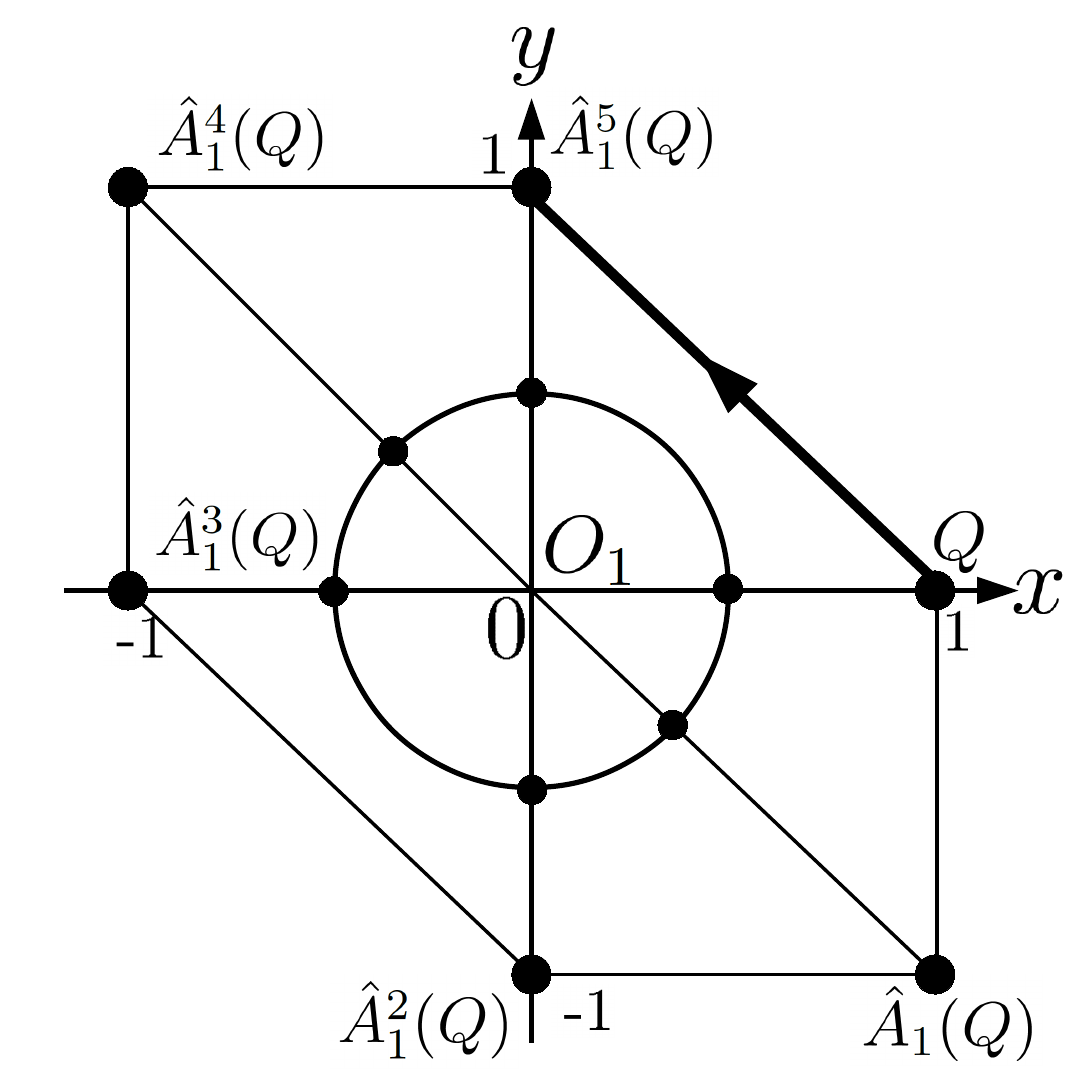}}\caption{The action of $\hat A_1$ in some neighborhood of the fixed point. }\label{A1}\end{figure}

Connect  points of the set $\mathcal{O}_{Q}$ in pairs  by means of segment following the counterclockwise order on the circle.  We obtain a closed curve (see Fig. \ref{A1}). It is an invariant curve of the map $\hat A_1$. The  open arc $(Q,\hat A_1^5(Q))$ does not contain points of the orbit  of the point  $Q$  in the counterclockwise direction .  Therefore, $d_3=5$.

Consider the map  $\hat A_2:
 \begin{cases}
   \overline{x}=y
   \\
  \overline{y}=-x-y+1
 \end{cases}.
$   This map is covering with respect to $f_{A_5}^2$.  The point $O_2(\frac{1}{3}, \frac{1}{3})$ is fixed under $\hat A_2$.

Analogously to finding $d_3$, we construct a closed curve invariant under the map $f_{A_5}^2$  and find the parameter $d_2=2$.

Consider the map $\hat A_3:
 \begin{cases}
   \overline{x}=-x+1
   \\
  \overline{y}=-y+1
 \end{cases}.
$    This map is covering with respect to $f_{A_5}^3$.  The point $O_3( \frac{1}{2}, \frac{1}{2})$ is fixed under $\hat A_3$.

The map $\hat A_3$ defines the rotation by the angle $\pi$ around the point  $O_3( \frac{1}{2}, \frac{1}{2})$  of the plane. From here we find  $d_1=1$. 

Thus, the complete characteristic of the map $f_{A_5}$ given by the matrix $A_5$ is the following  collection of the numbers: $n=6,\, p=1 , \, n_1=3, n_2=2, n_3=1,\,d_1=1,\,d_2=2,\,d_3=5$ (the complete characteristic $\kappa_5$).

\noindent\textbf{Acknowledgements}\\
\textit{The publication was prepared within the framework of the Academic Fund Program at the HSE University in 2021-2022 (grant  21-04-004).}

\bigskip

\end{document}